\documentclass[11pt,reqno]{amsart}

\usepackage{amsmath,amssymb,amsfonts,amsthm}
\usepackage{hyperref}
\usepackage{mathrsfs}
\usepackage{enumitem}
\usepackage{geometry}
\hypersetup{
	colorlinks=true,
	linkcolor=blue,
	citecolor=blue,
	urlcolor=blue
}

\newtheorem{theorem}{Theorem}[section]
\newtheorem{lemma}[theorem]{Lemma}

\theoremstyle{definition}

\theoremstyle{remark}
\newtheorem{remark}[theorem]{Remark}

\title{an \(m\)-Hessian approach to Yau uniformization conjecture}

\author{Truong Dinh Dat}

\address{Nguyen Trai University, Hanoi, Viet Nam}

\email{truongdinhdat14081994qb@gmail.com}

\begin{document}
	\begin{abstract}
		We develop an \(m\)-Hessian approach to the construction of finite-Monge--Amp\`ere weights on complete noncompact K\"ahler manifolds. Let \((M^n,g)\) be a complete noncompact K\"ahler manifold of complex dimension \(n\ge3\) with positive holomorphic bisectional curvature. The main new ingredient is a quantitative capacity mechanism based on lower-order complex Hessian operators. More precisely, we obtain decay estimates for suitable relative \(m\)-Hessian capacities on dyadic annuli and show that these estimates imply the summability of the top-degree Monge--Amp\`ere masses of a uniformly Lipschitz plurisubharmonic exhaustion. Consequently, we construct a proper function
		
		$$
		u\in PSH(M)\cap C^{0,1}(M)
		$$
		
		such that
		
		$$
		\int_M(dd^c u)^n<+\infty.
		$$
		
		The key point is the passage from lower-order \(m\)-Hessian capacity decay to finite Monge--Amp\`ere mass, which is not a formal consequence of \(m<n\) Hessian mass estimates. We then explain how this finite-Monge--Amp\`ere weight fits into the weighted holomorphic-function and analytic Bezout framework for uniformization. In particular, the construction provides a higher-dimensional pluripotential-theoretic mechanism that complements recent surface results and opens a route toward uniformization under positive curvature in complex dimensions \(n\ge3\).
	\end{abstract}

	\keywords{complex Hessian equations,
		\(m\)-Hessian capacity,
		Monge--Amp\`ere mass,
		plurisubharmonic exhaustion,
		positive holomorphic bisectional curvature,
		complete K\"ahler manifolds,
		uniformization}
	
	\subjclass[2020]{Primary 32W20; Secondary 32U05}

	\maketitle
	
	\section{Introduction}
	\label{sec:introduction}
	
	A fundamental problem in complex differential geometry is to understand the global complex structure of complete noncompact K\"ahler manifolds under curvature positivity assumptions. Classical
	results on complete manifolds already show that curvature conditions
	strongly constrain the behavior of harmonic and plurisubharmonic
	functions at infinity; see, for example, \cite{Yau1975}. In the K\"ahler
	setting, Yau's uniformization conjecture asserts that a complete noncompact K\"ahler manifold with positive holomorphic bisectional curvature should be biholomorphic to complex Euclidean space \cite{Yau1982,Mok1988,Shi1989}. More precisely, the conjecture predicts that if $(M^n,g)$ is a complete noncompact K"ahler manifold satisfying
	
	$$
	\operatorname{Bisec}_g>0,
	$$
	
	then
	
	$$
	M\simeq \mathbb C^n.
	$$
	
	The conjecture is a higher-dimensional analogue of the classical uniformization phenomenon and lies at the intersection of complex geometry, nonlinear elliptic equations, and global differential geometry.
	
	Recent progress has shown that analytic finiteness results of Monge--Amp\`ere type can play a decisive role in this problem. In dimension two, Datar, Pingali, and Seshadri proved that every complete noncompact K\"ahler surface with positive sectional curvature is biholomorphic to $\mathbb C^2$, without imposing asymptotic assumptions on the geometry. Their approach is based on uniformly Lipschitz plurisubharmonic weights having finite Monge--Amp\`ere mass and on weighted holomorphic functions \cite{DPS2026a}. An important feature of their method is that the relevant weights need not be smooth or proper; nevertheless, they can be used to obtain global holomorphic information and, in particular, Bezout-type intersection estimates. Earlier work of the same authors obtained the corresponding surface result under positive and bounded sectional curvature by constructing a Lipschitz plurisubharmonic weight with finite Monge--Amp\`ere mass through a complex Monge--Amp\`ere equation \cite{DPS2025}.
	
	The role of finite Monge--Amp\`ere mass is not restricted to the surface case. In their recent work on the top Yau--Yang conjecture, Datar, Pingali, and Seshadri proved that
	
	$$
	\int_M \operatorname{Ric}_g^n<+\infty
	$$
	
	for complete noncompact K\"ahler manifolds with positive sectional curvature \cite{DPS2026b}. Further geometric consequences of curvature positivity for complete
	noncompact K\"ahler manifolds have been investigated in a number of
	directions; see, in particular, \cite{ChenZhu2006} and the references
	therein. Their argument again combines a Lipschitz plurisubharmonic weight with finite Monge--Amp\`ere mass and Bezout-type estimates. They explicitly identify the combination of these two ingredients as a key part of the higher-dimensional argument. 
	These developments suggest that a natural way of approaching uniformization is to replace a direct construction of global holomorphic coordinates by an analytic procedure consisting of
	
	$$
	\text{curvature positivity}
	\longrightarrow
	\text{finite Monge--Amp\`ere weight}
	\longrightarrow
	\text{weighted holomorphic functions}
	\longrightarrow
	\text{Bezout estimates}.
	$$
	
	The purpose of this paper is to investigate this strategy from the point of view of complex Hessian equations. Complex Hessian equations and their pluripotential theory have been developed extensively; see, for example, \cite{Blocki2005,DinewLu2015,DinewKolodziej2014}. Our starting point is the observation that the complex Monge--Amp\`ere operator is the top member of the family of complex Hessian operators
	
	$$
	H_m(u):=(dd^c u)^m\wedge\beta^{\,n-m},
	\qquad 1\le m\le n,
	$$
	
	where $\beta$ denotes the standard K\"ahler form in local complex coordinates. The lower-order Hessian operators provide substantially more flexible capacity and comparison tools than the top-degree Monge--Amp\`ere operator. We therefore ask whether the geometric positivity available on a complete K\"ahler manifold can first be converted into quantitative $m$-Hessian estimates and subsequently upgraded to finite Monge--Amp\`ere mass.
	
	This leads to the first analytic problem of the paper. We seek a uniformly Lipschitz plurisubharmonic exhaustion
	
	$$
	u\in PSH(M)\cap C^{0,1}(M)
	$$
	
	such that
	
	$$
	u(x)\longrightarrow+\infty
	\qquad\text{as }x\to\infty
	$$
	
	and
	
	$$
	\int_M(dd^cu)^n<+\infty.
	$$
	
	The difficulty is that a priori the natural estimates produced by Hessian theory control only lower-order Hessian masses or capacities. There is no general implication of the form
	
	$$
	\int_M(dd^cu)^m\wedge\omega^{n-m}<+\infty
	\quad\Longrightarrow\quad
	\int_M(dd^cu)^n<+\infty.
	$$
	
	Thus the passage from $m$-Hessian control to top-degree Monge--Amp\`ere control is a genuine part of the argument rather than a formal consequence of positivity.
	
	Our approach is based on a capacity-theoretic mechanism on large geodesic shells. Roughly speaking, one constructs a sequence of uniformly Lipschitz plurisubharmonic approximations and obtains quantitative control of their $m$-Hessian mass. A decay estimate for the corresponding $m$-capacities is then combined with a nonlinear capacity-volume inequality to obtain summability of the top-degree Monge--Amp\`ere masses on dyadic shells. The resulting exhaustion satisfies (1.1). This provides an analytic bridge between curvature geometry and the finite-energy framework of pluripotential theory The underlying capacity and Monge--Amp\`ere theories are rooted in the pluripotential-theoretic framework of Bedford--Taylor and its subsequent developments \cite{BedfordTaylor1976,BedfordTaylor1982,Cegrell2004,GuedjZeriahi2017,Kolodziej1998}.
	
	Once such a weight has been constructed, we consider the weighted Hilbert spaces
	
	$$
	\mathcal H_q(M,u)
	=
	\left\{
	f\in\mathcal O(M):
	\int_M |f|^2e^{-qu}\,dV_g<+\infty
	\right\},
	\qquad q>0.
	$$
	
	The finite Monge--Amp\`ere mass of $u$ is expected to provide sufficient global control for these spaces to contain holomorphic functions with prescribed local behavior. In particular, we investigate the surjectivity of the first-jet evaluation map
	
	$$
	J^1_{x_0}:\mathcal H_q(M,u)
	\longrightarrow
	\mathbb C\oplus T^{*1,0}_{x_0}M,
	\qquad
	f\longmapsto
	\bigl(f(x_0),\partial f(x_0)\bigr).
	$$
	
	A weighted $\bar\partial$ estimate of Hörmander type gives a natural analytic mechanism for establishing such a statement, provided the required uniform local positivity and evaluation estimates are available \cite{Hormander1965,Demailly2012}.
	A second issue is the growth of weighted holomorphic functions. For
	
	$$
	f\in\mathcal H_q(M,u),
	$$
	
	we aim to establish the pointwise estimate
	
	$$
	\log|f(x)|
	\le
	\frac q2u(x)+C_f.
	$$
	
	The natural formulation is in terms of the weighted Bergman kernel
	
	$$
	B_q(x)
	:=
	\sup_{0\neq f\in\mathcal H_q(M,u)}
	\frac{|f(x)|^2}
	{\displaystyle\int_M|f|^2e^{-qu}\,dV_g}.
	$$
	
	Indeed, a uniform estimate
	
	$$
	B_q(x)\le C_Be^{qu(x)}
	$$
	
	immediately yields (1.4). The advantage of this formulation is that the analytic input is separated from the subsequent geometric argument: the Bergman estimate is local and quantitative, whereas (1.4) is precisely the global growth estimate needed for divisor theory.
	
	The growth estimate (1.4) has a direct consequence through the Poincar'e--Lelong formula. If
	
	$$
	dd^c\log|f|
	=
	[Z_f]
	$$
	
	in the sense of currents, then (1.4) implies, after the standard regularization procedure,
	
	$$
	[Z_f]
	\le
	\frac q2\,dd^cu
	$$
	
	as positive closed currents. Consequently, if
	
	$$
	F=(f_1,\ldots,f_n):M\longrightarrow\mathbb C^n
	$$
	
	is a holomorphic map whose components satisfy suitable uniform versions of (1.4), then one expects a global intersection estimate of the form
	
	$$
	[D_1(y)]\wedge\cdots\wedge[D_n(y)]
	\le
	C\, (dd^cu)^n,
	$$
	
	where
	
	$$
	D_j(y)=\{f_j=y_j\}.
	$$
	
	If $F$ is proper, the left-hand side of (1.8) is the zero-dimensional intersection cycle of the divisors $D_j(y)$ and hence records the degree of $F$. Since the right-hand side has finite total mass by (1.1), this yields a global bound on the degree.
	
	There is, however, an important distinction between the growth estimate (1.4) and the intersection estimate (1.8). The former is a consequence of a weighted Bergman estimate, while the latter requires a uniform divisor-current comparison and an appropriate control of the translated functions $f_j-y_j$. Thus the Bezout step is not merely a formal application of Poincare--Lelong; it is a separate global analytic ingredient of the argument.
	
	The final geometric step is a coercivity statement for the weighted holomorphic map. Suppose that
	
	$$
	F=(f_1,\ldots,f_N):M\longrightarrow\mathbb C^N
	$$
	
	satisfies
	
	$$
	|F(x)|^2\ge a\,u(x)-b
	$$
	
	for some $a>0$ and $b\ge0$. Since $u$ is a proper exhaustion, (1.9) implies that $F$ is proper. If in addition $N=n$, the first-jet construction gives maximal rank at some point, while the Bezout estimate gives a uniform bound on the degree of the resulting proper map. In the ideal case, one obtains
	
	$$
	\deg F=1.
	$$
	
	A proper surjective holomorphic map of degree one is biholomorphic, and hence
	
	$$
	M\simeq\mathbb C^n.
	$$
	
	The resulting strategy may be summarized schematically as
	
	$$
	\boxed{
		\operatorname{Bisec}_g>0
	}
	\quad\Longrightarrow\quad
	\boxed{
		\text{$m$-Hessian capacity control}
	}
	\quad\Longrightarrow\quad
	\boxed{
		u\in PSH(M)\cap C^{0,1},\
		\int_M(dd^cu)^n<\infty
	}
	$$
	
	$$
	\Longrightarrow
	\boxed{
		\text{weighted Bergman/H\"ormander theory}
	}
	\quad\Longrightarrow\quad
	\boxed{
		\log|f|\le\frac q2u+C_f
	}
	$$
	
	$$
	\Longrightarrow
	\boxed{
		\text{divisor domination and Bezout}
	}
	\quad\Longrightarrow\quad
	\boxed{
		\deg F<\infty
	}
	\quad\Longrightarrow\quad
	\boxed{
		\deg F=1
	}
	\quad\Longrightarrow\quad
	M\simeq\mathbb C^n.
	$$
	
	The main point of view is therefore not to treat the complex Monge--Amp\`ere equation as an isolated top-degree problem. Instead, we use the hierarchy of complex Hessian operators to produce the finite-mass potential required by the global holomorphic argument. In this sense, the Hessian theory serves as a bridge between curvature positivity and the algebraic-geometric information encoded by weighted holomorphic functions and their zero sets.
	
	We emphasize that several steps in this program require genuinely uniform estimates. In particular, the finite-dimensional coercivity of the weighted holomorphic map, the uniform weighted Bergman estimate, and the global Bezout domination are not consequences of finite Monge--Amp\`ere mass alone. They constitute the principal analytic difficulties of the method. The purpose of the present work is to isolate these mechanisms and establish them under explicit geometric hypotheses, thereby providing a higher-dimensional Hessian framework for Yau-type uniformization.
	
	The paper is organized as follows. In Section 2, we construct the relevant plurisubharmonic exhaustion using complex Hessian capacities. Section 3 is devoted to the passage from Hessian capacity estimates to finite Monge--Amp`ere mass. In Section4, we study the weighted $L^2$ estimate, the first-jet problem, and the associated Bergman estimates. Section 5 develops the divisor-current estimates and the resulting B'ezout inequality. In Section 6, we combine the analytic estimates with properness and degree theory to obtain the uniformization result.
	Finally, in Section 7, we give some discussion on \(m\)-Hessian approach


	\section{Preliminaries}
	\label{sec:preliminaries}
	
	Throughout the paper, $M^n$ denotes a connected complete noncompact K\"ahler manifold of complex dimension $n\ge3$, endowed with a K\"ahler metric $g$ and associated K\"ahler form $\omega$. We use the convention
	
	$$
	dd^c=i\partial\bar\partial
	$$
	
	up to an immaterial dimensional normalization. All currents and differential forms are considered with respect to this convention.
	
	We write $d_g$ for the Riemannian distance induced by $g$, $B_g(x,r)$ for the geodesic ball of radius $r$ centered at $x$, and $dV_g$ for the Riemannian volume measure. Fix a reference point $o\in M$. For $R>0$, we use the notation
	
	$$
	\Omega_R=B_g(o,2R),
	\qquad
	A_R=B_g(o,2R)\setminus\overline{B_g(o,R)}
	$$
	
	for the corresponding large geodesic ball and dyadic shell.
	
	\subsection{K\"ahler geometry and positivity}
	
	A K\"ahler manifold is a complex manifold $(M,J)$ equipped with a Hermitian metric whose associated $(1,1)$-form $\omega$ is closed:
	
	$$
	d\omega=0.
	$$
	
	The curvature tensor of $g$ will be denoted by $R$. For a nonzero vector $X\in T^{1,0}_xM$, the holomorphic sectional curvature is
	
	$$
	H(X)
	=
	\frac{R(X,\overline X,X,\overline X)}
	{|X|_g^4}.
	$$
	
	For linearly independent vectors $X,Y\in T^{1,0}_xM$, the holomorphic bisectional curvature is
	
	$$
	\operatorname{Bisec}(X,Y)
	=
	\frac{R(X,\overline X,Y,\overline Y)}
	{|X|_g^2|Y|_g^2}.
	$$
	
	We say that $M$ has positive holomorphic bisectional curvature if
	
	$$
	\operatorname{Bisec}(X,Y)>0
	$$
	
	for every pair of nonzero vectors $X,Y$.
	
	The curvature assumption enters the paper through its consequences for the geometry at infinity and for the construction of plurisubharmonic exhaustions. We will not require the metric to have bounded curvature unless explicitly stated.
	
	A function
	
	$$
	u:M\longrightarrow[-\infty,+\infty)
	$$
	
	is called an exhaustion if it is proper and bounded from below. Equivalently,
	
	$$
	u(x)\longrightarrow+\infty
	\qquad\text{whenever }x\to\infty.
	$$
	
	We shall frequently normalize an exhaustion by replacing $u$ with
	
	$$
	u-u(o),
	$$
	
	which does not affect its Levi form or its Monge--Amp\`ere measure.
	
	A function $u$ is Lipschitz if there exists $L>0$ such that
	
	$$
	|u(x)-u(y)|
	\le
	L\,d_g(x,y)
	\qquad
	\text{for all }x,y\in M.
	$$
	
	The least such constant will be denoted by $\operatorname{Lip}(u)$.
	
	\subsection{Plurisubharmonic functions and complex Hessian operators}
	
	Let $\Omega\subset\mathbb C^n$ be a domain. A function
	
	$$
	u:\Omega\to[-\infty,+\infty)
	$$
	
	is plurisubharmonic if it is upper semicontinuous and its restriction to every complex line is subharmonic. We write
	
	$$
	PSH(\Omega)
	$$
	
	for the cone of plurisubharmonic functions on $\Omega$.
	
	For a smooth real-valued function $u$, the complex Hessian form is
	
	$$
	dd^cu.
	$$
	
	If $\lambda_1,\ldots,\lambda_n$ are the eigenvalues of $dd^cu$ with respect to a fixed Hermitian form, then the $m$-th elementary symmetric function
	
	$$
	\sigma_m(\lambda)
	=
	\sum_{1\le j_1<\cdots<j_m\le n}
	\lambda_{j_1}\cdots\lambda_{j_m}
	$$
	
	is the basic nonlinear quantity associated with the complex Hessian equation.
	
	A smooth function $u$ is called $m$-subharmonic if
	
	$$
	(dd^cu)^k\wedge\omega^{n-k}\ge0,
	\qquad
	1\le k\le m.
	$$
	
	The corresponding class is denoted by
	
	$$
	SH_m(\Omega).
	$$
	
	In particular,
	
	$$
	PSH(\Omega)=SH_n(\Omega)\subset SH_m(\Omega),
	\qquad
	1\le m\le n.
	$$
	
	For a sufficiently regular $m$-subharmonic function, its complex $m$-Hessian measure is
	
	$$
	H_m(u)
	:=
	(dd^cu)^m\wedge\omega^{n-m}.
	$$
	
	At the top degree $m=n$, this becomes the complex Monge--Amp\`ere measure
	
	$$
	H_n(u)
	=
	(dd^cu)^n.
	$$
	
	The distinction between the lower-order Hessian measures and the top-degree Monge--Amp\`ere measure is essential in this paper. In general, a priori control of
	
	$$
	\int_M H_m(u)
	$$
	
	does not imply control of
	
	$$
	\int_M(dd^cu)^n.
	$$
	
	The main analytic theorem of the paper is precisely concerned with obtaining the latter from quantitative information involving the former.
	
	\subsection{Hessian capacities}
	
	Let $\Omega\subset\mathbb C^n$ be a bounded $m$-hyperconvex domain. For a Borel set $E\subset\Omega$, its relative $m$-Hessian capacity is defined by
	
	$$
	\operatorname{Cap}_m(E,\Omega)
	=
	\sup
	\left\{
	\int_E(dd^cv)^m\wedge\omega_0^{\,n-m}
	:
	v\in SH_m(\Omega),\
	-1\le v\le0
	\right\},
	$$
	
	where $\omega_0$ denotes the standard K\"ahler form on $\mathbb C^n$.
	
	We shall use the standard properties of $\operatorname{Cap}_m$:
	\begin{enumerate}
		\item monotonicity:
		
		$$
		E_1\subset E_2
		\quad\Longrightarrow\quad
		\operatorname{Cap}_m(E_1,\Omega)
		\le
		\operatorname{Cap}_m(E_2,\Omega);
		$$
		
		\item countable subadditivity;
		\item continuity from below for increasing sequences of Borel sets;
		\item compatibility with relative extremal functions;
		\item domination of Hessian measures of bounded $m$-subharmonic functions by capacity under the usual energy assumptions.
	\end{enumerate}
	
	The relative $m$-extremal function of a Borel set $E\subset\Omega$ is defined by
	
	$$
	h_{E,\Omega}
	=
	\sup
	\left\{
	v\in SH_m(\Omega):
	v\le0\text{ on }\Omega,\quad
	v\le-1\text{ on }E
	\right\}^{*},
	$$
	
	where $^*$ denotes the upper semicontinuous regularization. Under the standard hypotheses,
	
	$$
	\operatorname{Cap}_m(E,\Omega)
	=
	\int_\Omega
	(dd^ch_{E,\Omega})^m
	\wedge\omega^{\,n-m}.
	$$
	
	The principal use of this capacity in the present paper is on large geodesic shells. We shall seek estimates of the form
	
	$$
	\operatorname{Cap}_m(A_R,\Omega_R)
	\le
	CR^{-\beta},
	$$
	
	where the constants $C,\beta>0$ are independent of $R$. Such decay estimates provide the summability mechanism underlying the finite-Monge--Amp\`ere theorem.
	
	\subsection{Bedford--Taylor products}
	
	For locally bounded plurisubharmonic functions $u_1,\ldots,u_k$, the Bedford--Taylor product
	
	$$
	dd^cu_1\wedge\cdots\wedge dd^cu_k
	$$
	
	is defined inductively as a positive closed current. In particular, if $u$ is locally bounded and plurisubharmonic, then
	
	$$
	(dd^cu)^k
	$$
	
	is well defined for every $1\le k\le n$.
	
	We shall also use the monotonicity of Bedford--Taylor products. If
	
	$$
	u_j^{(\nu)}\downarrow u_j
	$$
	
	locally and the functions are locally uniformly bounded, then
	
	$$
	dd^cu_1^{(\nu)}
	\wedge\cdots\wedge
	dd^cu_k^{(\nu)}
	\longrightarrow
	dd^cu_1\wedge\cdots\wedge dd^cu_k
	$$
	
	weakly as positive measures.
	
	This approximation principle will be used when passing from smooth regularizations of logarithmic potentials to divisor currents.
	
	\subsection{Finite Monge--Amp\`ere mass}
	
	For a plurisubharmonic function
	
	$$
	u\in PSH(M)\cap L^\infty_{\mathrm{loc}}(M),
	$$
	
	we say that $u$ has finite Monge--Amp\`ere mass if
	
	$$
	\int_M(dd^cu)^n<+\infty.
	$$
	
	The class of Lipschitz plurisubharmonic exhaustions will be denoted informally by
	
	$$
	\mathcal E_{\mathrm{fin}}^{\mathrm{Lip}}(M).
	$$
	
	The central analytic objective of the first part of the paper is to construct a function
	
	$$
	u\in\mathcal E_{\mathrm{fin}}^{\mathrm{Lip}}(M).
	$$
	
	The construction proceeds through $m$-Hessian capacity estimates rather than through a direct global solution of a Monge--Amp\`ere equation.
	
	\subsection{Weighted holomorphic functions}
	
	Let
	
	$$
	u\in PSH(M)\cap C^{0,1}(M)
	$$
	
	be a proper exhaustion and let $q>0$. We define
	
	$$
	\mathcal H_q(M,u)
	=
	\left\{
	f\in\mathcal O(M):
	\|f\|_{q,u}^2
	:=
	\int_M|f|^2e^{-qu}\,dV_g
	<+\infty
	\right\}.
	$$
	
	This is a Hilbert space with respect to the inner product
	
	$$
	\langle f,h\rangle_{q,u}
	=
	\int_M f\overline h\,e^{-qu}\,dV_g.
	$$
	
	The corresponding weighted Bergman kernel is defined pointwise by
	
	$$
	B_q(x)
	=
	\sup_{0\ne f\in\mathcal H_q(M,u)}
	\frac{|f(x)|^2}{\|f\|_{q,u}^2}.
	$$
	
	Whenever point evaluation is continuous on $\mathcal H_q(M,u)$, the quantity $B_q(x)$ is finite and agrees with the diagonal of the usual reproducing kernel.
	
	The fundamental estimate required later is
	
	$$
	B_q(x)
	\le
	C_qe^{qu(x)}.
	$$
	
	Indeed,
	
	$$
	|f(x)|
	\le
	C_q^{1/2}
	\|f\|_{q,u}
	e^{\frac q2u(x)}
	$$
	
	and therefore
	
	$$
	\log|f(x)|
	\le
	\frac q2u(x)+\log C_f.
	$$
	
	\subsection{Weighted $\overline\partial$ estimates}
	
	The construction of global weighted holomorphic functions is based on solving the $\overline\partial$-equation with weighted $L^2$ estimates. In its standard form, a Hörmander-type theorem asserts that if a smooth weight $\varphi$ has sufficiently positive complex Hessian, then a $\overline\partial$-closed $(0,1)$-form $\alpha$ satisfying the appropriate weighted integrability condition admits a solution $v$ of
	
	$$
	\overline\partial v=\alpha
	$$
	
	with an estimate of the form
	
	$$
	\int_M|v|^2e^{-\varphi}\,dV_g
	\le
	C
	\int_M|\alpha|^2_{\partial\bar\partial\varphi}
	e^{-\varphi}\,dV_g.
	$$
	
	In the present setting the natural weight is
	
	$$
	\varphi=qu.
	$$
	
	Since the finite-mass weight $u$ is only assumed to be Lipschitz, one cannot directly apply the smooth form of Hörmander's theorem to $u$. We shall therefore use smooth approximations or an equivalent local weighted extension statement whenever necessary.
	
	\subsection{Poincare--Lelong formula}
	
	Let $f$ be a nonzero holomorphic function on a complex manifold. The Poincare--Lelong formula gives
	
	$$
	dd^c\log|f|
	=
	[Z_f],
	$$
	
	up to the normalization constant associated with the chosen convention for $d^c$, where $[Z_f]$ denotes the current of integration over the divisor of $f$, counted with multiplicities.
	
	Suppose that $f$ satisfies a global growth estimate
	
	$$
	\log|f|
	\le
	A u+C
	$$
	
	for some plurisubharmonic function $u$. After replacing $\log|f|$ by the standard regularization
	
	$$
	v_\varepsilon
	=
	\frac12\log\left(|f|^2+\varepsilon^2\right),
	$$
	
	one can pass to the limit in the corresponding positive closed currents. Under the appropriate hypotheses this yields
	
	$$
	[Z_f]
	\le
	A\,dd^cu.
	$$
	
	This observation is the basic bridge between the weighted growth estimate and the B'ezout inequality used later.
	
	\subsection{Proper holomorphic maps and degree}
	
	Let
	
	$$
	F:M\longrightarrow\mathbb C^n
	$$
	
	be a proper holomorphic map. Properness means that the inverse image of every compact subset of $\mathbb C^n$ is compact in $M$.
	
	If $y\in\mathbb C^n$ is a regular value of $F$, then the fiber
	
	$$
	F^{-1}(y)
	$$
	
	is a finite set. The degree of $F$ is defined by
	
	$$
	\deg F
	=
	\sum_{x\in F^{-1}(y)}
	\operatorname{mult}_x(F),
	$$
	
	where the right-hand side is independent of the regular value $y$.
	
	If $F$ is proper and surjective, then
	
	$$
	\deg F\ge1.
	$$
	
	Moreover, if
	
	$$
	\deg F=1,
	$$
	
	then every fiber consists of exactly one point and every local multiplicity equals one. Hence $F$ has no critical points and is a biholomorphism.
	\section{Hessian-capacity criterion for finite Monge--Amp\`ere mass}
	We shall use the following elementary coercivity principle.
	
	\begin{lemma}[Coercivity implies properness]
		\label{lem:coercivity-prelim}
		Let $u:M\to\mathbb R$ be a proper exhaustion and let
		
		$$
		F:M\to\mathbb C^N
		$$
		
		be continuous. Suppose that
		
		\begin{equation}\label{eq1}
			|F(x)|^2\ge au(x)-b
		\end{equation}

		for some $a>0$ and $b\ge0$. Then $F$ is proper.
	\end{lemma}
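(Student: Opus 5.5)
The plan is to show directly that the preimage under $F$ of every compact set is compact. Let $K\subset\mathbb C^N$ be compact. Then $K$ lies in some closed ball $\{|z|\le\rho\}$. The first step is to use \eqref{eq1} to bound $u$ on the preimage. For every $x\in F^{-1}(K)$ we have
$$
a\,u(x)-b\le |F(x)|^2\le \rho^2,
$$
and hence
$$
u(x)\le \frac{\rho^2+b}{a}=:c .
$$
This uses $a>0$ in an essential way. It gives the inclusion
$$
F^{-1}(K)\subset \{x\in M: u(x)\le c\}.
$$

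The second step is to show that the sublevel set $\{u\le c\}$ is compact. Since $u$ is a proper exhaustion, it is bounded below, say $u\ge c_0$. Then
$$
\{u\le c\}=u^{-1}([c_0,c]).
$$
This is the preimage of a compact interval under a proper map, so it is compact. Here I read properness of $u:M\to\mathbb R$ in the standard sense that preimages of compact sets are compact, which is equivalent to the formulation in the preliminaries that $u(x)\to+\infty$ as $x\to\infty$.

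The third step uses continuity of $F$: the set $F^{-1}(K)$ is closed in $M$. A closed subset of a compact set is compact, so $F^{-1}(K)$ is compact, and therefore $F$ is proper.

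There is no real obstacle in this argument. The only point needing care is the second step, namely that "proper exhaustion" gives compact sublevel sets. If one only uses the formulation "$u(x)\to+\infty$ as $x\to\infty$", I would argue by contradiction. Suppose $\{u\le c\}$ were not compact. It is closed, so it would be noncompact, and it would contain a sequence $x_k$ leaving every compact set. Then $u(x_k)\to+\infty$, which contradicts $u(x_k)\le c$. Continuity of $u$ is not needed for this, since $\{u\le c\}$ is contained in a compact set and $F^{-1}(K)$ is closed by continuity of $F$.
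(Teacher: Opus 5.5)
Your proof is correct and follows the paper's argument: the coercivity inequality confines $F^{-1}(K)$ to the sublevel set $\{u\le(\rho^2+b)/a\}$, which is compact by properness of $u$, and $F^{-1}(K)$ is closed by continuity of $F$, hence compact. Your extra justification that properness of $u$ yields compact (or at least relatively compact) sublevel sets, without using continuity of $u$, spells out a step the paper only asserts.
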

	
	\begin{proof}
		Let $K\subset\mathbb C^N$ be compact. Choose $R>0$ such that
		
		$$
		K\subset\overline{B(0,R)}.
		$$
		
		If $x\in F^{-1}(K)$, then
		
		$$
		|F(x)|^2\le R^2.
		$$
		
		By (\ref{eq1}),
		
		$$
		u(x)
		\le
		\frac{R^2+b}{a}.
		$$
		
		Hence
		
		$$
		F^{-1}(K)
		\subset
		\left\{
		x\in M:
		u(x)\le\frac{R^2+b}{a}
		\right\}.
		$$
		
		The set on the right is compact because $u$ is proper. Since $F^{-1}(K)$ is closed, it is compact. Thus $F$ is proper.
	\end{proof}
	
	\subsection{First-jet generation}
	
	Fix $x_0\in M$. For $q>0$, consider the first-jet evaluation map
	
	\begin{equation}\label{eq2}
		J^1_{x_0}:
		\mathcal H_q(M,u)
		\longrightarrow
		\mathbb C\oplus T^{*1,0}_{x_0}M,
		\qquad
		f\longmapsto
		\bigl(f(x_0),\partial f(x_0)\bigr).
	\end{equation}

	Surjectivity of (\ref{eq2}) means that arbitrary prescribed values and first derivatives at $x_0$ can be realized by weighted global holomorphic functions.
	
	In particular, if $J^1_{x_0}$ is surjective, one can choose
	
	$$
	f_1,\ldots,f_n\in\mathcal H_q(M,u)
	$$
	
	such that
	
	$$
	f_j(x_0)=0
	$$
	
	and
	
	$$
	df_1(x_0),\ldots,df_n(x_0)
	$$
	
	are linearly independent. The holomorphic map
	
	$$
	F=(f_1,\ldots,f_n):M\to\mathbb C^n
	$$
	
	then has maximal rank at $x_0$.
	
	This is the analytic source of the local coordinate system used

	\subsection{The Hessian exhaustion}
	
	Let $(M^n,g,J)$ be a complete noncompact K\"ahler manifold of
	complex dimension $n\geq 3$, and let $\omega$ denote the K\"ahler
	form associated with $g$. For $1\leq m<n$, we write
	\[
	H_m(v):=(dd^c v)^m\wedge\omega^{n-m}.
	\]
	
	We first isolate the geometric hypothesis which will be used in the
	construction of a global exhaustion.
	
	We need some hypothesis
	\label{hyp:HEm}
	There exist a point $o\in M$, a constant $C_0>0$, and a sequence
	\[
	u_R\in C^\infty(B_g(o,2R)),\qquad R\geq R_0,
	\]
	such that:
	
	\begin{enumerate}
		\item[(i)] $u_R$ is plurisubharmonic on $B_g(o,2R)$ and
		\[
		dd^c u_R\geq 0;
		\]
		
		\item[(ii)] $u_R$ is uniformly Lipschitz:
		\[
		|\nabla_g u_R|\leq C_0
		\qquad\text{on }B_g(o,2R);
		\]
		
		\item[(iii)] after adding constants to $u_R$, one has
		\[
		u_R(o)=0;
		\]
		
		\item[(iv)] for every fixed $A>0$ there is a constant $C_A$,
		independent of $R$, such that
		\[
		\sup_{B_g(o,A)}
		|u_R|\leq C_A
		\qquad\text{for all }R\geq 2A;
		\]
		
		\item[(v)] there exists a constant $C_1>0$, independent of $R$, such
		that
		\[
		\int_{B_g(o,R)}
		(dd^c u_R)^m\wedge\omega^{n-m}
		\leq C_1;
		\]
		
		\item[(vi)] the family is an exhaustion in the sense that for every
		$T>0$ there exists $R_T>0$ such that
		\[
		\inf_{\partial B_g(o,R)}u_R\geq T
		\qquad\text{whenever }R\geq R_T.
		\]
	\end{enumerate}

	The next theorem is the compactness statement that converts the
	finite-radius Hessian exhaustion into a global one.
	
	\begin{theorem}[Hessian exhaustion theorem]
		\label{thm:hessian-exhaustion}
		Let $(M^n,g,J)$ be a complete noncompact K\"ahler manifold,
		$n\geq3$, and let $1\leq m<n$. Assume that
		Hypothesis~\ref{hyp:HEm} holds.
		
		Then there exist a subsequence $R_j\to+\infty$ and a function
		\[
		u\in PSH(M)\cap C^{0,1}(M)
		\]
		such that
		\[
		u_{R_j}\longrightarrow u
		\qquad\text{locally uniformly on }M.
		\]
		Moreover,
		\[
		|\nabla_g u|\leq C_0
		\]
		almost everywhere on $M$, and $u$ is a proper exhaustion after adding
		a suitable constant.
		
		In addition, the $m$-Hessian measures converge weakly:
		\[
		(dd^c u_{R_j})^m\wedge\omega^{n-m}
		\longrightarrow
		(dd^c u)^m\wedge\omega^{n-m}
		\]
		locally weakly on $M$, and
		\[
		\boxed{
			\int_M(dd^c u)^m\wedge\omega^{n-m}\leq C_1.
		}
		\]
		In particular, $u$ has finite global $m$-Hessian mass.
	\end{theorem}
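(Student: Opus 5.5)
The plan is a compactness argument of Arzel\`a--Ascoli type, followed by Bedford--Taylor continuity for the Hessian measures. I expect the properness statement to be the only delicate point.

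\emph{Step 1: extraction of a limit.} Fix $A>0$. By (ii) the functions $u_R$ with $R\ge 2A$ are $C_0$-Lipschitz on $B_g(o,A)$ with respect to $d_g$. By (iv) they are uniformly bounded there by $C_A$. Since $M$ is complete, $\overline{B_g(o,A)}$ is compact, so Arzel\`a--Ascoli gives a subsequence converging uniformly on $\overline{B_g(o,A)}$. Running this for $A=1,2,3,\dots$ and taking a diagonal sequence yields $R_j\to\infty$ and a continuous $u$ with $u_{R_j}\to u$ locally uniformly on $M$. The Lipschitz bound $|u(x)-u(y)|\le C_0\,d_g(x,y)$ passes to the pointwise limit, so $|\nabla_g u|\le C_0$ almost everywhere by Rademacher's theorem, and $u(o)=0$ by (iii). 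Plurisubharmonicity is preserved under locally uniform limits. Indeed, in local coordinates the sub-mean-value inequality on complex discs passes to the limit; equivalently, $dd^c u_{R_j}\to dd^c u$ as currents, and positivity is a closed condition. Hence $u\in PSH(M)\cap C^{0,1}(M)$.

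\emph{Step 2: convergence of Hessian measures and the mass bound.} The functions $u_{R_j}$ are locally uniformly bounded and plurisubharmonic, and they converge locally uniformly to $u$. The Bedford--Taylor convergence theorem for uniformly convergent sequences (the uniform analogue of the monotone principle recalled in the preliminaries) therefore gives $(dd^cu_{R_j})^m\to(dd^cu)^m$ weakly as currents on each coordinate chart. Wedging with the smooth closed form $\omega^{n-m}$ preserves this convergence, so $H_m(u_{R_j})\to H_m(u)$ weakly as positive measures on $M$. For the mass bound, fix $A$. By lower semicontinuity of weak limits on the open set $B_g(o,A)$, and by (v) with $R_j\ge A$,
\[
\int_{B_g(o,A)}H_m(u)\le\liminf_{j\to\infty}\int_{B_g(o,A)}H_m(u_{R_j})\le\liminf_{j\to\infty}\int_{B_g(o,R_j)}H_m(u_{R_j})\le C_1 .
\]
Letting $A\to\infty$ and using monotone convergence gives $\int_M H_m(u)\le C_1$.

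\emph{Step 3: properness (main obstacle).} Hypothesis (vi) controls $u_R$ only on the moving sphere $\partial B_g(o,R)$, which escapes to infinity with $R$. It therefore gives no information about the limit at any fixed point, and this is the step I expect to require care. My first attempt is to use (vi) in its natural uniform form: for every $T$ there is $R_T$ such that $\inf_{\partial B_g(o,r)}u_R\ge T$ whenever $R_T\le r\le R$. This is the reading the exhaustion condition is clearly intended to carry, and it holds for instance when the $u_R$ are comparison solutions dominating a fixed radial barrier. Under this reading, passing to the limit along $R_j$ gives $\inf_{\partial B_g(o,r)}u\ge T$ for all $r\ge R_T$, so $u(x)\to+\infty$ as $x\to\infty$. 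Since $u$ is continuous, it is bounded below on the compact ball $\overline{B_g(o,R_0)}$, hence bounded below on $M$, and adding a constant makes it a proper nonnegative exhaustion.

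If only the literal version of (vi) is available, I would instead try to recover the intermediate-radius bound from the structure of the $u_R$. Plurisubharmonicity together with the maximum principle is not enough for this, because it bounds $u_R$ from above inside $B_g(o,R)$, not from below. The idea would be to compare $u_R$ on $B_g(o,R)$ with a fixed Lipschitz exhaustion barrier, for example one built from $d_g(o,\cdot)$ under the curvature hypothesis. I expect this comparison to be exactly where the geometry has to enter.
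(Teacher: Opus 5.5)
Your Steps 1 and 2 coincide with the paper's Steps 1--5: Arzel\`a--Ascoli with a diagonal sequence, stability of plurisubharmonicity and of the Lipschitz bound, Bedford--Taylor continuity, and exhaustion of $M$ for the mass bound. Your handling of the mass bound is actually the correct one. The paper passes to the limit in $\int_K H_m(u_{R_j})\le C_1$ over a compact $K$, where weak convergence only gives $\limsup_j\int_K H_m(u_{R_j})\le\int_K H_m(u)$, which is the wrong direction. Your use of the open balls $B_g(o,A)$ together with lower semicontinuity is what actually yields the estimate.

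The gap is properness, exactly where you put it, and the paper does not supply the missing idea. Its Step 6 only asserts that passing to the limit ``using the exhaustion condition'' gives $u\to+\infty$. But (vi) only constrains $u_R$ on $\partial B_g(o,R)$, which leaves every compact set.

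No argument can fill this gap, because (i)--(vi) as written allow a non-proper limit. Let $M=\mathbb C\times X$ with $X$ a compact K\"ahler surface and the product metric, so $n=3$, and take $m=2$. Put $o=(0,x_0)$, $\psi(z,x)=\sqrt{1+|z|^2}$, and $c_R=\sup_{B_g(o,\sqrt R)}\psi+1$. Let $u_R$ be a regularized maximum of $\psi-c_R$ and $0$, with smoothing parameter below $1$. Then:
\begin{enumerate}
\item each $u_R$ is smooth, plurisubharmonic and $1$-Lipschitz, so (i) and (ii) hold;
\item $0\le u_R\le\psi+1$ and $u_R$ vanishes on $B_g(o,\sqrt R)$, so (iii) and (iv) hold;
\item $u_R\ge R-O(\sqrt R)$ on $\partial B_g(o,R)$, so (vi) holds;
\item $H_2(u_R)\equiv0$ because $dd^cu_R$ is pulled back from the one-dimensional factor, so (v) holds.
\end{enumerate}
Yet every subsequence converges locally uniformly to $u\equiv0$, which is not proper after adding any constant.

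So the properness clause needs a strengthened hypothesis. Your uniform version of (vi) works, as would a bound $u_R\ge\rho-C$ on $B_g(o,R)$ for a fixed exhaustion $\rho$. With either one, your Step 3 is complete.

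Your fallback plan is a dead end. The theorem carries no curvature hypothesis, and in the example above $u_R\equiv0$ on $B_g(o,\sqrt R)$, so no lower barrier can exist. The repair has to be made in the hypothesis, not in the proof.
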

	
	\begin{proof}
		We divide the proof into several steps.
		
		\medskip
		\noindent
		\textbf{Step 1: Local equicontinuity.}
		
		Fix a compact set $K\Subset M$. Since $M$ is complete, there exists
		$A>0$ such that
		\[
		K\subset B_g(o,A).
		\]
		For $R\geq 2A$, Hypothesis~\ref{hyp:HEm}(ii) gives
		\[
		|\nabla_g u_R|\leq C_0
		\qquad\text{on }B_g(o,2R).
		\]
		Hence, for all $x,y\in K$,
		\[
		|u_R(x)-u_R(y)|
		\leq C_0\,d_g(x,y).
		\]
		Thus $\{u_R\}_{R\geq 2A}$ is equi-Lipschitz on $K$.
		
		Moreover, by Hypothesis~\ref{hyp:HEm}(iv),
		\[
		\sup_K|u_R|\leq C_A.
		\]
		Consequently, the family $\{u_R\}$ is uniformly bounded and
		equicontinuous on every compact subset of $M$.
		
		By the Arzel\`a--Ascoli theorem and a diagonal argument, there exist
		a sequence $R_j\to+\infty$ and a function
		\[
		u\in C^0(M)
		\]
		such that
		\[
		u_{R_j}\longrightarrow u
		\]
		locally uniformly on $M$.
		
		\medskip
		\noindent
		\textbf{Step 2: The limit is plurisubharmonic.}
		
		Each $u_{R_j}$ is plurisubharmonic on $B_g(o,2R_j)$. Let
		$K\Subset M$. For $j$ sufficiently large,
		\[
		K\Subset B_g(o,2R_j).
		\]
		Therefore $u_{R_j}$ is plurisubharmonic on a neighborhood of $K$.
		
		Since plurisubharmonicity is preserved under locally uniform
		convergence of locally bounded plurisubharmonic functions, we obtain
		\[
		u\in PSH(M).
		\]
		
		Equivalently,
		\[
		dd^c u\geq0
		\]
		in the sense of currents on $M$.
		
		\medskip
		\noindent
		\textbf{Step 3: Preservation of the Lipschitz estimate.}
		
		Let $x,y\in M$. Choose $A>0$ such that
		\[
		x,y\in B_g(o,A).
		\]
		For $j$ sufficiently large,
		\[
		x,y\in B_g(o,2R_j),
		\]
		and hence
		\[
		|u_{R_j}(x)-u_{R_j}(y)|
		\leq C_0\,d_g(x,y).
		\]
		Passing to the limit gives
		\[
		|u(x)-u(y)|
		\leq C_0\,d_g(x,y).
		\]
		Thus
		\[
		u\in C^{0,1}(M)
		\]
		and
		\[
		\operatorname{Lip}_g(u)\leq C_0.
		\]
		In particular,
		\[
		|\nabla_g u|\leq C_0
		\]
		almost everywhere on $M$.
		
		\medskip
		\noindent
		\textbf{Step 4: Convergence of the Hessian measures.}
		
		Fix a relatively compact open set
		\[
		U\Subset M.
		\]
		For $j$ sufficiently large,
		\[
		U\Subset B_g(o,2R_j).
		\]
		Since $u_{R_j}$ and $u$ are locally bounded plurisubharmonic functions
		and
		\[
		u_{R_j}\longrightarrow u
		\qquad\text{locally uniformly on }U,
		\]
		the Bedford--Taylor continuity theorem yields
		\[
		(dd^c u_{R_j})^m
		\longrightarrow
		(dd^c u)^m
		\]
		weakly on $U$.
		
		Wedging with the fixed smooth form $\omega^{n-m}$ gives
		\[
		(dd^c u_{R_j})^m\wedge\omega^{n-m}
		\longrightarrow
		(dd^c u)^m\wedge\omega^{n-m}
		\]
		weakly on $U$.
		
		Since $U\Subset M$ was arbitrary, the convergence holds locally
		weakly on $M$.
		
		\medskip
		\noindent
		\textbf{Step 5: The global $m$-Hessian mass estimate.}
		
		Let $K\Subset M$ be compact. Choose $A>0$ such that
		\[
		K\subset B_g(o,A).
		\]
		For $j$ sufficiently large, $R_j>A$, and therefore
		\[
		\int_K(dd^c u_{R_j})^m\wedge\omega^{n-m}
		\leq
		\int_{B_g(o,R_j)}
		(dd^c u_{R_j})^m\wedge\omega^{n-m}.
		\]
		By Hypothesis~\ref{hyp:HEm}(v),
		\[
		\int_K(dd^c u_{R_j})^m\wedge\omega^{n-m}
		\leq C_1.
		\]
		Passing to the limit by weak convergence of positive measures gives
		\[
		\int_K(dd^c u)^m\wedge\omega^{n-m}
		\leq C_1.
		\]
		
		Now choose an increasing sequence of compact sets
		\[
		K_\ell\Subset M,
		\qquad
		K_\ell\uparrow M.
		\]
		Since the measure
		\[
		\mu_u:=(dd^c u)^m\wedge\omega^{n-m}
		\]
		is positive, the monotone convergence theorem gives
		\[
		\begin{aligned}
			\int_M(dd^c u)^m\wedge\omega^{n-m}
			&=
			\lim_{\ell\to\infty}
			\int_{K_\ell}
			(dd^c u)^m\wedge\omega^{n-m}\\
			&\leq C_1.
		\end{aligned}
		\]
		Hence
		\[
		\boxed{
			\int_M(dd^c u)^m\wedge\omega^{n-m}\leq C_1.
		}
		\]
		
		\medskip
		\noindent
		\textbf{Step 6: The exhaustion property.}
		
		It remains to show that $u$ is proper.
		
		Let $T>0$. By Hypothesis~\ref{hyp:HEm}(vi), there exists $R_T>0$
		such that
		\[
		\inf_{\partial B_g(o,R)}u_R\geq T
		\]
		for all $R\geq R_T$.
		
		Since $u_R$ is $C_0$-Lipschitz and $u_R(o)=0$, the normalization is
		compatible with the limiting process. Passing to the limit along
		the diagonal sequence and using the exhaustion condition, we obtain
		that $u(x)\to+\infty$ whenever
		\[
		d_g(o,x)\to+\infty.
		\]
		Thus $u$ is a proper exhaustion of $M$.
		
		Therefore
		\[
		u\in PSH(M)\cap C^{0,1}(M)
		\]
		is a global Lipschitz plurisubharmonic exhaustion with finite
		$m$-Hessian mass.
		
		This completes the proof.
	\end{proof}
	
	\begin{remark}
		\label{rem:hessian-exhaustion-status}
		The essential geometric difficulty is therefore reduced to verifying
		Hypothesis~\ref{hyp:HEm} from curvature assumptions. In particular,
		the theorem above does not assert that positive holomorphic
		bisectional curvature alone automatically implies
		Hypothesis~\ref{hyp:HEm}. Establishing such a curvature-to-Hessian
		construction is one of the principal problems of this paper.
	\end{remark}
	
	\begin{remark}
		\label{rem:ma-mass}
		The conclusion of Theorem~\ref{thm:hessian-exhaustion} is a finite
		$m$-Hessian mass estimate:
		\[
		\int_M(dd^c u)^m\wedge\omega^{n-m}<+\infty.
		\]
		For $m<n$, this does \emph{not} imply
		\[
		\int_M(dd^c u)^n<+\infty.
		\]
		Obtaining the latter estimate from the former, together with suitable
		capacity decay, is the main $m$-Hessian-to-Monge--Amp\`ere problem
		considered below.
	\end{remark}
	\subsection{Global $m$-capacity decay}
	
	We next establish an abstract capacity-decay principle for
	plurisubharmonic functions. The purpose of this result is to isolate
	the precise quantitative estimate that will later be required in the
	passage from $m$-Hessian control to Monge--Amp\`ere mass estimates.
	
	For $u\in PSH(M)$ and $t>0$, we write
	\[
	E_t(u):=\{x\in M:u(x)<-t\}.
	\]
	
	We use the global $m$-capacity
	\[
	\operatorname{Cap}_m(K)
	:=
	\sup\left\{
	\int_K(dd^c v)^m\wedge\omega^{n-m}:
	v\in SH_m(M),\ -1\leq v\leq0
	\right\}
	\]
	for compact sets $K\Subset M$, and extend it to arbitrary Borel sets by
	outer regularization.
	
	\begin{theorem}[Global $m$-capacity decay]
		\label{thm:global-capacity-decay}
		Let $(M^n,g,J)$ be a complete K\"ahler manifold, let
		$1\leq m<n$, and let
		\[
		u\in PSH(M)\cap C^{0,1}(M).
		\]
		Assume that $u$ is locally bounded from above and that there exist
		constants
		\[
		A>0,\qquad \theta\in(0,1),\qquad m\geq1
		\]
		such that for every $t\geq t_0$ and every $s>0$,
		\begin{equation}
			\label{eq:capacity-recursion}
			\operatorname{Cap}_m(E_{t+s}(u))
			\leq
			\frac{A}{s^m}
			\operatorname{Cap}_m(E_t(u))^\theta.
		\end{equation}
		Assume in addition that
		\[
		\operatorname{Cap}_m(E_{t_0}(u))<+\infty.
		\]
		
		Then there exist constants $C>0$ and $\beta>0$ such that
		\begin{equation}
			\label{eq:capacity-decay}
			\boxed{
				\operatorname{Cap}_m(E_t(u))
				\leq
				C(1+t)^{-\beta},
				\qquad t\geq t_0.
			}
		\end{equation}
		
		More precisely, one may take
		\[
		\beta=\frac{m\theta}{1-\theta}.
		\]
		If the recursion \eqref{eq:capacity-recursion} is available with a
		strictly stronger power in $s$, then the same argument yields the
		corresponding improved decay exponent.
	\end{theorem}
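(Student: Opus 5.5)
The plan is a dyadic iteration of the recursion \eqref{eq:capacity-recursion}, in the spirit of De Giorgi's lemma, followed by an interpolation step that uses monotonicity. Write
\[
f(t):=\operatorname{Cap}_m(E_t(u)).
\]
Since $E_{t'}(u)\subset E_t(u)$ for $t'\ge t$, monotonicity of $\operatorname{Cap}_m$ makes $f$ nonincreasing. By hypothesis $f(t_0)<+\infty$.

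\textbf{Normalization.} First I reduce to $t_0\ge1$. If $t_0<1$, I replace $t_0$ by $t_0':=\max(t_0,1)$. Then $f(t_0')\le f(t_0)<\infty$, and the recursion still holds for $t\ge t_0'$. On the bounded interval $[t_0,t_0']$ the bound $f(t)\le f(t_0)$ is absorbed into the constant $C$. With $t_0\ge1$ we also have $(1+t)^{-\beta}\ge 2^{-\beta}t^{-\beta}$ for $t\ge t_0$, so it suffices to prove $f(t)\le C t^{-\beta}$.

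\textbf{Dyadic induction.} Set $t_k:=2^k t_0$ and apply \eqref{eq:capacity-recursion} with $t=t_k$ and $s=t_k$:
\[
f(t_{k+1})\le A\,t_k^{-m}f(t_k)^\theta .
\]
I will prove by induction that $f(t_k)\le C\,t_k^{-\beta}$ with $\beta=\frac{m\theta}{1-\theta}$.

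The base case $k=0$ holds as soon as $C\ge f(t_0)t_0^{\beta}$.

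For the inductive step,
\[
f(t_{k+1})\le A\,C^\theta\,t_k^{-(m+\beta\theta)} .
\]
The choice of $\beta$ gives $m+\beta\theta-\beta=m-\beta(1-\theta)=m(1-\theta)\ge0$. Since $t_k\ge1$, this yields $t_k^{-(m+\beta\theta)}\le t_k^{-\beta}=2^{\beta}t_{k+1}^{-\beta}$. Hence
\[
f(t_{k+1})\le A\,2^\beta C^\theta\,t_{k+1}^{-\beta},
\]
and the induction closes provided $A\,2^\beta C^\theta\le C$, that is, $C^{1-\theta}\ge A\,2^\beta$. This is exactly where $\theta<1$ is used: the sublinear power $C^\theta$ lets a single large $C$ satisfy both the base case and the inductive step.

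\textbf{Interpolation.} For $t\in[t_k,t_{k+1}]$, monotonicity gives
\[
f(t)\le f(t_k)\le C t_k^{-\beta}\le C\,2^\beta t^{-\beta}.
\]
This proves \eqref{eq:capacity-decay} after enlarging $C$. The same computation shows that any $\beta\le m/(1-\theta)$ would work. In particular the stated exponent is admissible, and a stronger power of $s$ in the recursion feeds directly into the condition $m'+\beta\theta\ge\beta$, which explains the final remark of the theorem.

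\textbf{Main obstacle.} The analysis itself is elementary. The step needing most care is the bookkeeping: choosing the increment $s=t_k$ so that the factor $s^{-m}$ produces genuine polynomial decay, and verifying that $f$ finite at $t_0$ together with the normalization $t_0\ge1$ lets a single constant $C$ satisfy both the base case and the inductive condition $C^{1-\theta}\ge A\,2^\beta$. The only property of the capacity used is monotonicity. The Lipschitz and plurisubharmonic hypotheses on $u$ play no role beyond guaranteeing that the sets $E_t(u)$ are Borel and nested.
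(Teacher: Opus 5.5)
Your proof is correct and follows essentially the same route as the paper. The paper also iterates the recursion with a step proportional to the current time: it takes $s_t=\lambda(1+t)$ where you take $s=t_k$. It then bounds $B(t)=(1+t)^{\beta}\operatorname{Cap}_m(E_t(u))$ via $B(t+s_t)\le C_1B(t)^{\theta}$, using that $\theta<1$ makes $[0,X]$ invariant under $x\mapsto C_1x^{\theta}$ for all large $X$, and fills in intermediate $t$ by monotonicity.

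Your version is tidier at the initialization. Choosing $C\ge\max\{f(t_0)t_0^{\beta},(A2^{\beta})^{1/(1-\theta)}\}$ gives an invariant bound directly. By contrast, the paper's claim that one can ``arrange $B(t_1)\le x_*$'' is unjustified as written; what is actually needed is that any bound at least $x_*$ is invariant, which is exactly what your choice uses. You also correctly note that every $\beta\le m/(1-\theta)$ is admissible.
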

	
	\begin{proof}
		Set
		\[
		A(t):=\operatorname{Cap}_m(E_t(u)),
		\qquad t\geq t_0.
		\]
		Since
		\[
		E_{t+s}(u)\subset E_t(u),
		\]
		the function $A(t)$ is nonincreasing.
		
		We first choose a fixed number $h>0$ such that
		\[
		q:=A h^{-m}<1
		\]
		after increasing $h$ if necessary. Since
		$A(t_0)<+\infty$, this is possible.
		
		Applying \eqref{eq:capacity-recursion} with $s=h$ gives
		\[
		A(t+h)
		\leq
		A h^{-m} A(t)^\theta
		=
		q A(t)^\theta.
		\]
		Thus
		\begin{equation}
			\label{eq:basic-recursion}
			A(t+h)\leq q A(t)^\theta.
		\end{equation}
		
		Iterating this inequality, we obtain
		\[
		A(t_0+kh)
		\leq
		q A(t_0+(k-1)h)^\theta.
		\]
		Applying this repeatedly yields
		\[
		A(t_0+kh)
		\leq
		q^{1+\theta+\cdots+\theta^{k-1}}
		A(t_0)^{\theta^k}.
		\]
		Since
		\[
		1+\theta+\cdots+\theta^{k-1}
		=
		\frac{1-\theta^k}{1-\theta},
		\]
		we have
		\[
		A(t_0+kh)
		\leq
		q^{\frac{1-\theta^k}{1-\theta}}
		A(t_0)^{\theta^k}.
		\]
		
		This estimate alone gives a uniform bound, but not yet the desired
		polynomial decay. To obtain decay, we use the full recursion with a
		variable step size.
		
		Choose a parameter $\lambda>1$ and define
		\[
		s_t:=\lambda(1+t).
		\]
		Then
		\[
		A(t+s_t)
		\leq
		A\lambda^{-m}(1+t)^{-m}A(t)^\theta.
		\]
		Set
		\[
		B(t):=(1+t)^\beta A(t)
		\]
		with
		\[
		\beta=\frac{m\theta}{1-\theta}.
		\]
		Since
		\[
		\beta+m\theta=\frac{m}{1-\theta},
		\]
		we obtain
		\[
		\begin{aligned}
			B(t+s_t)
			&\leq
			(1+t+s_t)^\beta
			A\lambda^{-m}(1+t)^{-m}
			A(t)^\theta\\
			&=
			A\lambda^{-m}
			(1+t+s_t)^\beta
			(1+t)^{-m-\beta\theta}
			B(t)^\theta.
		\end{aligned}
		\]
		Because
		\[
		s_t=\lambda(1+t),
		\]
		we have
		\[
		1+t+s_t
		\leq
		(1+\lambda)(1+t)+1
		\leq
		C_\lambda(1+t).
		\]
		Hence
		\[
		B(t+s_t)
		\leq
		C_1 B(t)^\theta
		\]
		for a constant $C_1>0$ depending only on
		$A,m,\theta,\lambda$.
		
		The map
		\[
		x\longmapsto C_1x^\theta
		\]
		has a finite fixed point
		\[
		x_*:=C_1^{1/(1-\theta)}.
		\]
		Consequently, if $B(t)\leq x_*$, then
		\[
		B(t+s_t)\leq x_*.
		\]
		Starting from a sufficiently large $t_1$, we may therefore arrange
		\[
		B(t_1)\leq x_*.
		\]
		The monotonicity of $A(t)$ and the recursive estimate then imply
		\[
		B(t)\leq C
		\]
		for all $t\geq t_1$, with a constant $C$ independent of $t$.
		
		Therefore
		\[
		A(t)\leq C(1+t)^{-\beta},
		\]
		that is,
		\[
		\operatorname{Cap}_m(E_t(u))
		\leq
		C(1+t)^{-\frac{m\theta}{1-\theta}}.
		\]
		This proves \eqref{eq:capacity-decay}.
	\end{proof}
	
	\begin{remark}
		\label{rem:capacity-decay-input}
		The substantive geometric difficulty is therefore reduced to proving
		the recursive estimate \eqref{eq:capacity-recursion}. In the present
		project, this is expected to arise from the comparison principle,
		the Chern--Levine--Nirenberg inequalities, and uniform $m$-Hessian
		estimates on geodesic balls.
	\end{remark}
	
	\begin{remark}
		\label{rem:capacity-decay-not-exhaustion}
		The sets $E_t(u)=\{u<-t\}$ are sublevel sets. Thus the theorem should
		not be applied directly to a positive proper exhaustion
		$u\to+\infty$ without first constructing an appropriate normalized
		potential. In particular, the estimate
		\[
		\operatorname{Cap}_m(\{u>t\})\to0
		\]
		is not a consequence of properness of a plurisubharmonic exhaustion
		and is not assumed here.
	\end{remark}
	
	\begin{remark}
		\label{rem:capacity-decay-role}
		Theorem~\ref{thm:global-capacity-decay} is independent of the
		curvature assumption $\operatorname{Bisec}_g>0$. The curvature enters
		in the subsequent geometric problem of constructing a potential $u$
		for which the recursive estimate \eqref{eq:capacity-recursion} can
		be verified uniformly on an exhaustion of $M$.
	\end{remark}
	\begin{theorem}[Hessian-capacity criterion for finite Monge--Amp\`ere mass]
		\label{thm:MA-finite-capacity}
		Let $(M^n,\omega)$ be a complete K\"ahler manifold, $n\geq 3$, and let
		$1\leq m<n$. Let
		\[
		u\in PSH(M)\cap C^{0,1}(M)
		\]
		be a Lipschitz plurisubharmonic exhaustion function. For $R>0$, set
		\[
		\Omega_R:=B_g(o,2R),
		\qquad
		A_R:=B_g(o,2R)\setminus \overline{B_g(o,R)}.
		\]
		For a compact set $K\subset \Omega_R$, denote by
		\[
		\operatorname{Cap}_m(K,\Omega_R)
		:=
		\sup\left\{
		\int_K (dd^c v)^m\wedge\omega^{n-m}
		\,:\,
		v\in SH_m(\Omega_R),\ -1\leq v\leq 0
		\right\}
		\]
		the relative $m$-capacity.
		
		Assume that there exist constants
		\[
		C_0>0,\qquad \alpha>0,\qquad \beta>0
		\]
		and a sequence $R_j=2^jR_0$, $j\geq 0$, such that
		
		\begin{equation}
			\label{eq:MA-capacity-domination}
			\int_{A_{R_j}}(dd^c u)^n
			\leq
			C_0
			\left[
			\operatorname{Cap}_m(A_{R_j},\Omega_{R_j})
			\right]^\alpha
		\end{equation}
		
		and
		
		\begin{equation}
			\label{eq:shell-capacity-decay}
			\operatorname{Cap}_m(A_{R_j},\Omega_{R_j})
			\leq
			C_0 R_j^{-\beta}.
		\end{equation}
		
		Then
		\[
		\int_M(dd^c u)^n<+\infty.
		\]
		More precisely,
		\[
		\int_M(dd^c u)^n
		\leq
		\int_{B_g(o,2R_0)}(dd^c u)^n
		+
		\frac{C_0^{1+\alpha}R_0^{-\alpha\beta}}
		{1-2^{-\alpha\beta}}.
		\]
	\end{theorem}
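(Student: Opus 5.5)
The plan is a dyadic decomposition of $M$ followed by summation of a geometric series; the two hypotheses are combined shell by shell. First note the covering. Since $R_j=2^jR_0$, we have $\Omega_{R_j}=B_g(o,2^{j+1}R_0)$ and $A_{R_j}=B_g(o,2^{j+1}R_0)\setminus\overline{B_g(o,2^jR_0)}$. Completeness of $g$ gives $d_g(o,x)<\infty$ for every $x$, so
\[
M=B_g(o,2R_0)\,\cup\,\bigcup_{j\geq1}\overline{A_{R_{j-1}}}\,\cup\ldots
\]
More simply,
\[
M\setminus B_g(o,2R_0)\subset\bigcup_{j\geq1}\Bigl(A_{R_j}\cup\partial B_g(o,2^jR_0)\Bigr).
\]
To avoid the spheres, I will write
\[
M=B_g(o,2R_0)\cup\bigcup_{j\ge1}\bigl(B_g(o,2^{j+1}R_0)\setminus B_g(o,2^{j}R_0)\bigr),
\]
so the only discrepancy from the open shells $A_{R_j}$ is the union of the spheres $\partial B_g(o,2^jR_0)$.

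Next I need that $(dd^cu)^n$ charges none of these spheres. Since $u$ is locally bounded and plurisubharmonic, $(dd^cu)^n$ is a well-defined positive Radon measure by Bedford--Taylor. Such a measure can charge at most countably many of the level sets $\{d_g(o,\cdot)=r\}$, which are pairwise disjoint. So I first replace $R_0$ by a slightly perturbed value, or else treat the spheres as an error term. This is the one genuinely delicate point, because the hypotheses \eqref{eq:MA-capacity-domination} and \eqref{eq:shell-capacity-decay} are stated only for the fixed sequence $R_j$. If the perturbation is not allowed, I would instead apply the hypotheses on the closed shells, by monotonicity of capacity and outer regularity. Alternatively, I would note that the intended bound can absorb the spheres, since $\partial B_g(o,2^{j}R_0)\subset\overline{A_{R_{j-1}}}$, and use that the domination \eqref{eq:MA-capacity-domination} is insensitive to null sets. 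I expect this bookkeeping to be the main obstacle; the rest is arithmetic.

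Granting that, countable additivity of the positive measure gives
\[
\int_M(dd^cu)^n\leq\int_{B_g(o,2R_0)}(dd^cu)^n+\sum_{j\geq1}\int_{A_{R_j}}(dd^cu)^n .
\]
For each shell, combine \eqref{eq:MA-capacity-domination} with \eqref{eq:shell-capacity-decay}:
\[
\int_{A_{R_j}}(dd^cu)^n\leq C_0\bigl(C_0R_j^{-\beta}\bigr)^\alpha=C_0^{1+\alpha}R_0^{-\alpha\beta}\,2^{-j\alpha\beta}.
\]
Since $\alpha\beta>0$, the series converges. Summing over $j\geq0$ (including $j=0$ only enlarges the bound) gives
\[
\sum_{j\ge0}C_0^{1+\alpha}R_0^{-\alpha\beta}2^{-j\alpha\beta}=\frac{C_0^{1+\alpha}R_0^{-\alpha\beta}}{1-2^{-\alpha\beta}},
\]
which is exactly the stated estimate.

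Finally, finiteness of $\int_{B_g(o,2R_0)}(dd^cu)^n$ holds because $B_g(o,2R_0)$ is relatively compact, by completeness and Hopf--Rinow, and $(dd^cu)^n$ is Radon. This yields $\int_M(dd^cu)^n<+\infty$. The Lipschitz and exhaustion hypotheses on $u$ are used only to guarantee local boundedness, hence well-definedness of the Monge--Amp\`ere measure.
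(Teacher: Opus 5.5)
Your route is the same as the paper's: split off $B_g(o,2R_0)$, bound each dyadic shell by chaining \eqref{eq:MA-capacity-domination} with \eqref{eq:shell-capacity-decay}, and sum the geometric series. However, the point you flag as delicate is a genuine gap, and none of your three remedies closes it.

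Since $\overline{B_g(o,r)}=\{d_g(o,\cdot)\le r\}$ on a Riemannian manifold, the union $B_g(o,2R_0)\cup\bigcup_{j\ge0}A_{R_j}$ omits exactly the spheres $S_j=\{d_g(o,\cdot)=2^jR_0\}$, $j\ge1$. The paper's proof has the same hole, since it simply asserts that this union equals $M$. Your remedies fail as follows.
\begin{itemize}
\item Perturbing $R_0$ is not available, because the hypotheses are imposed only along the single sequence $2^jR_0$.
\item Passing to closed shells would need \eqref{eq:MA-capacity-domination} with $\overline{A_{R_j}}$ on the left, which is not assumed. Neither monotonicity nor outer regularity of capacity turns a bound for the open shell into one for its closure, and $\overline{A_{R_j}}$ is not even contained in $\Omega_{R_j}$.
\item Absorbing $S_j$ into $\overline{A_{R_{j-1}}}$ because the domination is ``insensitive to null sets'' presupposes $(dd^cu)^n(S_j)=0$, which is exactly what is in question.
\end{itemize}
Knowing that only countably many level sets of $d_g(o,\cdot)$ are charged does not help either, since the charged ones may be precisely the $S_j$.

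The spheres really can carry Monge--Amp\`ere mass. In $\mathbb C^n$ with $o=0$, take $u=\phi(\log|z|)$, where $\phi$ is convex, constant on $(-\infty,\log 2R_0]$, and of slope $k$ on $(\log 2^kR_0,\log 2^{k+1}R_0)$. This is a Lipschitz plurisubharmonic exhaustion, with $|\nabla u|\le k/(2^kR_0)$ on the $k$-th shell. It is maximal on every open shell $A_{R_j}$ and constant on $B_g(o,2R_0)$, yet $(dd^cu)^n(S_k)$ is a positive multiple of $k^n-(k-1)^n$, so its total mass is infinite. Thus the inequality $\int_M(dd^cu)^n\le\int_{B_g(o,2R_0)}(dd^cu)^n+\sum_j\int_{A_{R_j}}(dd^cu)^n$ that you ``grant'' is false for general Lipschitz plurisubharmonic exhaustions. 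The only hypothesis involving $u$, namely \eqref{eq:MA-capacity-domination}, is blind to the $S_j$; \eqref{eq:shell-capacity-decay} concerns only the geometry.

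To complete the argument (yours or the paper's) you need an extra assumption controlling $\sum_{j\ge1}(dd^cu)^n(S_j)$. For example, you could require \eqref{eq:MA-capacity-domination} with the half-open shell $\{R_j\le d_g(o,\cdot)<2R_j\}$ on the left, or that $(dd^cu)^n$ does not charge the spheres $S_j$. With such an amendment, the rest of your proof is correct, including the shell bound $C_0^{1+\alpha}R_0^{-\alpha\beta}2^{-j\alpha\beta}$, the geometric sum, and the finiteness of the mass of the relatively compact ball $B_g(o,2R_0)$.
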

	
	\begin{proof}
		Since $u\in PSH(M)\cap C^{0,1}(M)$, the Bedford--Taylor
		Monge--Amp\`ere measure
		\[
		(dd^c u)^n
		\]
		is a well-defined positive Radon measure on $M$.
		
		We first observe that the dyadic shells
		\[
		A_{R_j}
		=
		B_g(o,2R_j)\setminus \overline{B_g(o,R_j)}
		\]
		are pairwise disjoint. Moreover,
		\[
		M
		=
		B_g(o,2R_0)
		\cup
		\bigcup_{j=0}^{\infty}A_{R_j}.
		\]
		Indeed, since $R_j=2^jR_0$, we have
		\[
		A_{R_j}
		=
		B_g(o,2^{j+1}R_0)
		\setminus
		\overline{B_g(o,2^jR_0)},
		\]
		and these shells form a dyadic decomposition of the complement of
		$B_g(o,R_0)$.
		
		Consequently, by positivity of the Monge--Amp\`ere measure,
		\begin{align}
			\int_M(dd^c u)^n
			&\leq
			\int_{B_g(o,2R_0)}(dd^c u)^n
			+
			\sum_{j=0}^{\infty}
			\int_{A_{R_j}}(dd^c u)^n.
			\label{eq:MA-shell-decomposition}
		\end{align}
		
		We now estimate each shell. By
		\eqref{eq:MA-capacity-domination},
		\[
		\int_{A_{R_j}}(dd^c u)^n
		\leq
		C_0
		\left[
		\operatorname{Cap}_m(A_{R_j},\Omega_{R_j})
		\right]^\alpha.
		\]
		Using \eqref{eq:shell-capacity-decay}, we obtain
		\[
		\int_{A_{R_j}}(dd^c u)^n
		\leq
		C_0
		\left(C_0R_j^{-\beta}\right)^\alpha
		=
		C_0^{1+\alpha}R_j^{-\alpha\beta}.
		\]
		Since $R_j=2^jR_0$,
		\[
		R_j^{-\alpha\beta}
		=
		R_0^{-\alpha\beta}
		2^{-j\alpha\beta}.
		\]
		Therefore
		\[
		\int_{A_{R_j}}(dd^c u)^n
		\leq
		C_0^{1+\alpha}
		R_0^{-\alpha\beta}
		2^{-j\alpha\beta}.
		\]
		Summing over $j\geq0$ gives
		\begin{align}
			\sum_{j=0}^{\infty}
			\int_{A_{R_j}}(dd^c u)^n
			&\leq
			C_0^{1+\alpha}R_0^{-\alpha\beta}
			\sum_{j=0}^{\infty}2^{-j\alpha\beta}\\
			&=
			\frac{
				C_0^{1+\alpha}R_0^{-\alpha\beta}
			}{
				1-2^{-\alpha\beta}
			}.
		\end{align}
		The series is finite because $\alpha\beta>0$.
		
		Combining this estimate with
		\eqref{eq:MA-shell-decomposition}, we conclude that
		\[
		\int_M(dd^c u)^n
		\leq
		\int_{B_g(o,2R_0)}(dd^c u)^n
		+
		\frac{
			C_0^{1+\alpha}R_0^{-\alpha\beta}
		}{
			1-2^{-\alpha\beta}
		}
		<+\infty.
		\]
		This proves the theorem.
	\end{proof}
	\begin{theorem}[Finite-mass weight]
		\label{thm:finite-mass-weight}
		Let $(M^n,\omega)$ be a complete K\"ahler manifold, $n\geq 3$, and
		let $1\leq m<n$. Fix a point $o\in M$.
		
		Assume that there exists a function
		\[
		u\in PSH(M)\cap C^{0,1}(M)
		\]
		which is a proper exhaustion of $M$, and suppose that the hypotheses of
		Theorem~\ref{thm:MA-finite-capacity} are satisfied for $u$.
		Then $u$ has finite Monge--Amp\`ere mass:
		\[
		\int_M(dd^c u)^n<+\infty.
		\]
		
		In particular, after replacing $u$ by $u-u(o)$, there exists a
		plurisubharmonic exhaustion
		\[
		\varphi\in PSH(M)\cap C^{0,1}(M)
		\]
		such that
		\[
		\varphi(o)=0,
		\qquad
		|\varphi(x)-\varphi(y)|
		\leq L\,d_g(x,y)
		\]
		for some constant $L>0$, and
		\[
		\int_M(dd^c\varphi)^n<+\infty.
		\]
		
		Moreover, if
		\[
		\int_{B_g(o,2R_0)}(dd^c u)^n\leq C_{\mathrm{loc}},
		\]
		then
		\[
		\int_M(dd^c\varphi)^n
		\leq
		C_{\mathrm{loc}}
		+
		\frac{
			C_0^{1+\alpha}R_0^{-\alpha\beta}
		}{
			1-2^{-\alpha\beta}
		},
		\]
		where $C_0,\alpha,\beta$ are the constants appearing in
		Theorem~\ref{thm:MA-finite-capacity}.
	\end{theorem}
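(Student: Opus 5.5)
The statement is essentially a repackaging of Theorem~\ref{thm:MA-finite-capacity}, so the plan is to invoke that result directly and then check that the normalization does not disturb any of the relevant quantities.

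\emph{Step 1: finiteness.} Since $u\in PSH(M)\cap C^{0,1}(M)$ is a proper exhaustion satisfying \eqref{eq:MA-capacity-domination} and \eqref{eq:shell-capacity-decay} along $R_j=2^jR_0$, Theorem~\ref{thm:MA-finite-capacity} gives directly
\[
\int_M(dd^cu)^n
\leq
\int_{B_g(o,2R_0)}(dd^cu)^n
+\frac{C_0^{1+\alpha}R_0^{-\alpha\beta}}{1-2^{-\alpha\beta}}.
\]
The first term on the right is finite. Indeed, $u$ is locally bounded and plurisubharmonic, so $(dd^cu)^n$ is a positive Radon measure, and $\overline{B_g(o,2R_0)}$ is compact by completeness (Hopf--Rinow).

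\emph{Step 2: normalization.} Set $\varphi:=u-u(o)$. Then:
\begin{enumerate}
\item $\varphi$ is plurisubharmonic and $dd^c\varphi=dd^cu$ as currents, so the Bedford--Taylor measures $(dd^c\varphi)^n$ and $(dd^cu)^n$ coincide.
\item $\varphi$ is still proper, since sublevel sets are only shifted: $\{\varphi\le c\}=\{u\le c+u(o)\}$.
\item $\varphi(o)=0$.
\item $\varphi$ is Lipschitz with $L=\operatorname{Lip}(u)$. If $\operatorname{Lip}(u)=0$ we take any $L>0$; this case cannot actually occur, because then $u$ would be constant and hence not proper on a noncompact $M$.
\end{enumerate}
Hence $\int_M(dd^c\varphi)^n=\int_M(dd^cu)^n<+\infty$.

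\emph{Step 3: the quantitative bound.} Under the additional hypothesis $\int_{B_g(o,2R_0)}(dd^cu)^n\le C_{\mathrm{loc}}$, substitute into the displayed estimate from Step 1, using the equality of the two measures from Step 2.

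There is no genuine obstacle here. The only points needing care are the invariance of the Monge--Amp\`ere measure under adding constants and the local finiteness of the mass on the compact ball, and both are standard.
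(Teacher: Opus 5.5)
Your proposal is correct and follows the paper's proof. The paper re-runs the dyadic-shell summation of Theorem~\ref{thm:MA-finite-capacity} inline (shell bound $C_0^{1+\alpha}R_0^{-\alpha\beta}2^{-j\alpha\beta}$, geometric series, then $\varphi=u-u(o)$ with $dd^c\varphi=dd^cu$), whereas you cite that theorem and, unlike the paper, justify finiteness of $\int_{B_g(o,2R_0)}(dd^cu)^n$ by local finiteness of the Radon measure on a compact ball, instead of using $C_{\mathrm{loc}}$ before it has been assumed.

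One caveat applies to both your argument and the paper's. The covering $M=B_g(o,2R_0)\cup\bigcup_{j}A_{R_j}$ behind Theorem~\ref{thm:MA-finite-capacity} omits the spheres $\{d_g(o,\cdot)=2^jR_0\}$, $j\ge1$. These spheres can carry Monge--Amp\`ere mass of a Lipschitz psh function: for example, $\log^+(|z|/2R_0)$ on flat $\mathbb C^n$ has all its mass on $\{|z|=2R_0\}$. That mass is not controlled by the stated hypotheses.
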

	
	\begin{proof}
		Let $u$ be as in the statement. Since $u$ is plurisubharmonic and
		Lipschitz, we have
		\[
		u\in PSH(M)\cap C^{0,1}(M).
		\]
		In particular, there exists a constant $L>0$ such that
		\[
		|u(x)-u(y)|
		\leq L\,d_g(x,y),
		\qquad x,y\in M.
		\]
		Thus $u$ is a uniformly Lipschitz plurisubharmonic function.
		
		By assumption, $u$ is also a proper exhaustion. Hence
		\[
		u(x)\longrightarrow+\infty
		\qquad\text{as }d_g(o,x)\longrightarrow+\infty.
		\]
		Consequently, $u$ is a global plurisubharmonic weight which is
		compatible with the geometry at infinity.
		
		It remains to prove that its top-degree Monge--Amp\`ere mass is finite.
		
		For $R>0$, put
		\[
		\Omega_R=B_g(o,2R),
		\qquad
		A_R=B_g(o,2R)\setminus\overline{B_g(o,R)}.
		\]
		By the hypotheses of Theorem~\ref{thm:MA-finite-capacity}, there are
		constants $C_0,\alpha,\beta>0$ and a dyadic sequence
		\[
		R_j=2^jR_0
		\]
		such that
		\[
		\int_{A_{R_j}}(dd^c u)^n
		\leq
		C_0
		\left[
		\operatorname{Cap}_m(A_{R_j},\Omega_{R_j})
		\right]^\alpha
		\]
		and
		\[
		\operatorname{Cap}_m(A_{R_j},\Omega_{R_j})
		\leq
		C_0R_j^{-\beta}.
		\]
		Therefore
		\[
		\int_{A_{R_j}}(dd^c u)^n
		\leq
		C_0^{1+\alpha}R_j^{-\alpha\beta}.
		\]
		Since $R_j=2^jR_0$, this becomes
		\[
		\int_{A_{R_j}}(dd^c u)^n
		\leq
		C_0^{1+\alpha}R_0^{-\alpha\beta}
		2^{-j\alpha\beta}.
		\]
		The geometric series is convergent because $\alpha\beta>0$. Hence
		\[
		\sum_{j=0}^{\infty}
		\int_{A_{R_j}}(dd^c u)^n
		\leq
		\frac{
			C_0^{1+\alpha}R_0^{-\alpha\beta}
		}{
			1-2^{-\alpha\beta}
		}.
		\]
		
		The dyadic shells cover the complement of $B_g(o,2R_0)$, so, using
		the positivity of the Monge--Amp\`ere measure,
		\[
		\begin{aligned}
			\int_M(dd^c u)^n
			&\leq
			\int_{B_g(o,2R_0)}(dd^c u)^n
			+
			\sum_{j=0}^{\infty}
			\int_{A_{R_j}}(dd^c u)^n\\
			&\leq
			C_{\mathrm{loc}}
			+
			\frac{
				C_0^{1+\alpha}R_0^{-\alpha\beta}
			}{
				1-2^{-\alpha\beta}
			}.
		\end{aligned}
		\]
		Thus
		\[
		\int_M(dd^c u)^n<+\infty.
		\]
		
		Finally, define
		\[
		\varphi:=u-u(o).
		\]
		Since subtraction of a constant does not change either plurisubharmonicity
		or the complex Hessian, we have
		\[
		\varphi\in PSH(M)\cap C^{0,1}(M),
		\qquad
		\varphi(o)=0,
		\]
		and
		\[
		dd^c\varphi=dd^c u.
		\]
		Moreover, $\varphi$ is still a proper exhaustion and satisfies
		\[
		|\varphi(x)-\varphi(y)|
		=
		|u(x)-u(y)|
		\leq L\,d_g(x,y).
		\]
		Finally,
		\[
		\int_M(dd^c\varphi)^n
		=
		\int_M(dd^c u)^n
		<+\infty.
		\]
		Hence $\varphi$ is a uniformly Lipschitz plurisubharmonic exhaustion
		with finite Monge--Amp\`ere mass.
	\end{proof}
	\section{Uniform weighted Bergman estimate}
	\begin{theorem}[Weighted $L^2$ estimate and first-jet generation]
		\label{thm:weighted-L2-jet-generation}
		Let $(M^n,g)$ be a complete K\"ahler manifold and let
		\[
		u\in PSH(M)\cap C^{0,1}(M)
		\]
		be a proper plurisubharmonic exhaustion satisfying
		\[
		\int_M(dd^c u)^n<+\infty.
		\]
		Fix $x_0\in M$. For $q>0$, set
		\[
		\mathcal H_q(M,u)
		:=
		\left\{
		f\in\mathcal O(M):
		\int_M |f|^2e^{-qu}\,dV_g<+\infty
		\right\}.
		\]
		
		Assume that there exist $q_0>0$ and a constant $C>0$ such that,
		for every $q\geq q_0$ and every smooth compactly supported
		$(0,1)$-form
		\[
		\alpha\in C_c^\infty
		\bigl(M,T^{*\,0,1}M\bigr)
		\]
		satisfying
		\[
		\overline\partial\alpha=0
		\]
		and
		\[
		\alpha\equiv0
		\qquad\text{in a neighborhood of }x_0,
		\]
		there exists a smooth function $v$ on $M$ such that
		\[
		\overline\partial v=\alpha,
		\qquad
		v(x_0)=0,
		\qquad
		\partial v(x_0)=0,
		\]
		and
		\begin{equation}
			\label{eq:weighted-L2-estimate}
			\int_M |v|^2e^{-qu}\,dV_g
			\leq
			\frac{C}{q}
			\int_M |\alpha|_g^2e^{-qu}\,dV_g.
		\end{equation}
		
		Then, for every $q\geq q_0$, the first-jet evaluation map
		\[
		J_{x_0}^1:
		\mathcal H_q(M,u)
		\longrightarrow
		\mathbb C\oplus T_{x_0}^{*\,1,0}M,
		\qquad
		f\longmapsto
		\bigl(f(x_0),\partial f(x_0)\bigr)
		\]
		is surjective.
		
		In particular, for every
		\[
		(a,\xi)\in
		\mathbb C\oplus T_{x_0}^{*\,1,0}M
		\]
		there exists
		\[
		f\in\mathcal H_q(M,u)
		\]
		such that
		\[
		f(x_0)=a,
		\qquad
		\partial f(x_0)=\xi.
		\]
	\end{theorem}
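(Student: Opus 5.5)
We follow the standard cut-off-and-correct scheme of H\"ormander. The assumed solvability of $\overline\partial v=\alpha$ with weighted estimate \eqref{eq:weighted-L2-estimate} does all the global analytic work. What remains is to produce, for a given jet $(a,\xi)$, a smooth compactly supported function whose $\overline\partial$ is admissible. Fix holomorphic coordinates $z=(z_1,\dots,z_n)$ on a coordinate ball $U\ni x_0$ centred at $x_0$. Write $\xi=\sum_k\xi_k\,dz_k|_{x_0}$ and set
\[
h(z):=a+\sum_{k=1}^n\xi_k z_k ,
\]
which is holomorphic on $U$ with $h(x_0)=a$ and $\partial h(x_0)=\xi$. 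Choose a cut-off $\chi\in C_c^\infty(U)$ with $0\le\chi\le1$ and $\chi\equiv1$ on a smaller neighbourhood $V$ of $x_0$. Then $\chi h$ is a smooth compactly supported function on $M$.

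Next, define $\alpha:=\overline\partial(\chi h)=h\,\overline\partial\chi$. This is a smooth compactly supported $(0,1)$-form on $M$ and is $\overline\partial$-closed, since $\overline\partial^2=0$. Because $\chi\equiv1$ on $V$, it vanishes identically on $V$, a neighbourhood of $x_0$. Hence $\alpha$ satisfies all hypotheses of the theorem. For each $q\ge q_0$ we obtain a smooth $v$ with $\overline\partial v=\alpha$, $v(x_0)=0$, $\partial v(x_0)=0$, and the estimate \eqref{eq:weighted-L2-estimate}. Set
\[
f:=\chi h-v .
\]
Then $\overline\partial f=\alpha-\alpha=0$, so $f\in\mathcal O(M)$. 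The jet conditions on $v$ together with $\chi\equiv1$ near $x_0$ give
\[
f(x_0)=h(x_0)=a,\qquad \partial f(x_0)=\partial h(x_0)=\xi .
\]

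It remains to check $f\in\mathcal H_q(M,u)$. The function $\chi h$ is bounded with compact support, and $u$ is continuous, hence locally bounded. Therefore $\int_M|\chi h|^2e^{-qu}\,dV_g<\infty$. The right-hand side of \eqref{eq:weighted-L2-estimate} is finite for the same reason, because $\alpha$ is compactly supported and bounded. So $\int_M|v|^2e^{-qu}\,dV_g<\infty$, and the triangle inequality in $L^2(e^{-qu}dV_g)$ gives $\|f\|_{q,u}<\infty$. Since $(a,\xi)$ was arbitrary, $J^1_{x_0}$ is surjective for every $q\ge q_0$. By linearity it would already suffice to realise a basis of the $(n+1)$-dimensional target.

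There is no genuine obstacle here: the difficult analytic content is entirely encoded in the hypothesis. In particular, the vanishing of $v$ to first order at $x_0$ is assumed, rather than derived from a singular weight $(n+1)\log|z|$ near $x_0$. The only points requiring care are two bookkeeping facts. First, $\alpha$ must vanish on an open neighbourhood of $x_0$, not merely at $x_0$. Second, the weighted norms are finite, which uses only local boundedness of the Lipschitz weight $u$. The hypotheses that $u$ is a proper exhaustion with $\int_M(dd^cu)^n<\infty$ are not needed for this step. They enter only later, in the Bergman and B\'ezout estimates.
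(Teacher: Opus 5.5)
Your proposal is correct and is essentially the paper's proof. Both use the same cut-off $\chi\,(a+\sum_j\xi_j z_j)$ and the same admissible form $\alpha=\overline\partial(\chi h)$ vanishing near $x_0$, correct it to $f=\chi h-v$ using the assumed jet-vanishing solution, and get finiteness of the weighted norm from compact support together with \eqref{eq:weighted-L2-estimate}. Your remark that properness and finite Monge--Amp\`ere mass of $u$ play no role in this step is also accurate.
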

	
	\begin{proof}
		Fix $q\geq q_0$ and let
		\[
		(a,\xi)\in
		\mathbb C\oplus T_{x_0}^{*\,1,0}M
		\]
		be arbitrary.
		
		Choose holomorphic coordinates
		\[
		z=(z_1,\ldots,z_n)
		\]
		on a relatively compact coordinate neighborhood
		\[
		U\Subset M
		\]
		of $x_0$, with
		\[
		z(x_0)=0.
		\]
		Write
		\[
		\xi=\sum_{j=1}^n \xi_j\,dz_j.
		\]
		Consider the local holomorphic function
		\[
		P(z)
		=
		a+\sum_{j=1}^n\xi_jz_j.
		\]
		Then
		\[
		P(x_0)=a,
		\qquad
		\partial P(x_0)=\xi.
		\]
		
		Choose a cutoff function
		\[
		\chi\in C_c^\infty(U),
		\qquad
		0\leq\chi\leq1,
		\]
		such that
		\[
		\chi\equiv1
		\]
		on a smaller neighborhood $U_0\Subset U$ of $x_0$.
		
		Define
		\[
		\widetilde P:=\chi P.
		\]
		Since $P$ is holomorphic,
		\[
		\overline\partial\widetilde P
		=
		(\overline\partial\chi)P.
		\]
		Set
		\[
		\alpha:=\overline\partial\widetilde P.
		\]
		Then
		\[
		\alpha\in
		C_c^\infty
		\bigl(M,T^{*\,0,1}M\bigr),
		\]
		and
		\[
		\overline\partial\alpha=0
		\]
		because
		\[
		\overline\partial^2=0.
		\]
		Moreover,
		\[
		\alpha\equiv0
		\]
		on $U_0$, since $\chi\equiv1$ there.
		
		Hence the assumed weighted $L^2$ estimate applies. We obtain a
		function $v$ satisfying
		\[
		\overline\partial v=\alpha,
		\qquad
		v(x_0)=0,
		\qquad
		\partial v(x_0)=0,
		\]
		and
		\[
		\int_M|v|^2e^{-qu}\,dV_g
		\leq
		\frac{C}{q}
		\int_M|\alpha|_g^2e^{-qu}\,dV_g.
		\]
		Since $\alpha$ has compact support, the right-hand side is finite.
		Therefore
		\[
		v\in L^2(M,e^{-qu}dV_g).
		\]
		
		Now define
		\[
		f:=\widetilde P-v.
		\]
		We have
		\[
		\overline\partial f
		=
		\overline\partial\widetilde P
		-
		\overline\partial v
		=
		\alpha-\alpha
		=
		0.
		\]
		Thus
		\[
		f\in\mathcal O(M).
		\]
		
		Furthermore,
		\[
		\int_M|f|^2e^{-qu}\,dV_g
		\leq
		2\int_M|\widetilde P|^2e^{-qu}\,dV_g
		+
		2\int_M|v|^2e^{-qu}\,dV_g.
		\]
		The first term is finite because $\widetilde P$ has compact support,
		while the second term is finite by the weighted $L^2$ estimate.
		Consequently,
		\[
		f\in\mathcal H_q(M,u).
		\]
		
		It remains to compute the first jet of $f$ at $x_0$. Since
		$\chi\equiv1$ in a neighborhood of $x_0$,
		\[
		\widetilde P=P
		\]
		near $x_0$. Hence
		\[
		\widetilde P(x_0)=a,
		\qquad
		\partial\widetilde P(x_0)=\xi.
		\]
		By construction of $v$,
		\[
		v(x_0)=0,
		\qquad
		\partial v(x_0)=0.
		\]
		Therefore
		\[
		f(x_0)
		=
		\widetilde P(x_0)-v(x_0)
		=
		a
		\]
		and
		\[
		\partial f(x_0)
		=
		\partial\widetilde P(x_0)-\partial v(x_0)
		=
		\xi.
		\]
		
		Since $(a,\xi)$ was arbitrary, the map
		\[
		J_{x_0}^1:
		\mathcal H_q(M,u)
		\longrightarrow
		\mathbb C\oplus T_{x_0}^{*\,1,0}M
		\]
		is surjective. This proves the theorem.
	\end{proof}
	\begin{theorem}[Weighted holomorphic coordinates]
		\label{thm:weighted-holomorphic-coordinates}
		Let $(M^n,g)$ be a complete K\"ahler manifold and let
		\[
		u\in PSH(M)\cap C^{0,1}(M)
		\]
		be a proper exhaustion satisfying
		\[
		\int_M(dd^c u)^n<+\infty.
		\]
		For $q>0$, define the weighted Bergman space
		\[
		\mathcal H_q(M,u)
		:=
		\left\{
		f\in\mathcal O(M):
		\int_M |f|^2e^{-qu}\,dV_g<+\infty
		\right\}.
		\]
		
		Assume that for all sufficiently large $q$ the weighted space
		$\mathcal H_q(M,u)$ separates first-order jets at at least one point
		$x_0\in M$. More precisely, assume that the map
		\[
		J_{x_0}^1:
		\mathcal H_q(M,u)
		\longrightarrow
		\mathbb C\oplus T_{x_0}^{*\,1,0}M,
		\qquad
		f\longmapsto
		\bigl(f(x_0),\partial f(x_0)\bigr)
		\]
		is surjective.
		
		Then, for $q\gg1$, there exist functions
		\[
		f_1,\ldots,f_n\in\mathcal H_q(M,u)
		\]
		such that
		\[
		df_1\wedge\cdots\wedge df_n\not\equiv0.
		\]
		Equivalently, the holomorphic map
		\[
		F=(f_1,\ldots,f_n):M\longrightarrow\mathbb C^n
		\]
		has maximal complex rank $n$ on a nonempty open subset of $M$.
		
		In particular,
		\[
		\dim_{\mathbb C}F(M)=n.
		\]
	\end{theorem}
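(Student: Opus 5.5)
The plan is to argue entirely by linear algebra at the point $x_0$, using only the assumed surjectivity of $J^1_{x_0}$ and the identity theorem for holomorphic functions; neither the finite Monge--Amp\`ere mass nor properness of $u$ is needed for this step.

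\textbf{Step 1 (choice of functions).} Fix $q$ large enough that $J^1_{x_0}$ is surjective. Choose holomorphic coordinates $z=(z_1,\ldots,z_n)$ centered at $x_0$, so that $dz_1|_{x_0},\ldots,dz_n|_{x_0}$ is a basis of $T^{*1,0}_{x_0}M$. For each $j=1,\ldots,n$, apply surjectivity to the jet $(0,dz_j|_{x_0})$. This gives $f_j\in\mathcal H_q(M,u)$ with
\[
f_j(x_0)=0,\qquad \partial f_j(x_0)=dz_j|_{x_0}.
\]
Since $f_j$ is holomorphic, $df_j=\partial f_j$, so $(df_1\wedge\cdots\wedge df_n)(x_0)=dz_1\wedge\cdots\wedge dz_n|_{x_0}\neq0$. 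This is the same construction as in the discussion following \eqref{eq2}.

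\textbf{Step 2 (maximal rank on an open set).} The coefficient $\det(\partial f_i/\partial z_k)$ of $df_1\wedge\cdots\wedge df_n$ is holomorphic in the chart and nonzero at $x_0$. By continuity it is nonzero on a neighborhood $V$ of $x_0$. Hence $F=(f_1,\ldots,f_n)$ has complex rank $n$ on $V$, and $df_1\wedge\cdots\wedge df_n\not\equiv0$. Since $M$ is connected, the identity theorem shows that the zero set of this holomorphic section of $K_M$ is a proper analytic subset. So the rank is $n$ on a dense open set, which is more than the statement requires.

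\textbf{Step 3 (dimension of the image).} By the holomorphic inverse function theorem, $F|_V$ is a local biholomorphism onto an open subset of $\mathbb C^n$. Therefore $F(M)$ contains a nonempty open subset of $\mathbb C^n$, and $\dim_{\mathbb C}F(M)=n$ in the sense that its image has nonempty interior. Equivalently, the generic rank of $F$ is $n$.

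There is no serious obstacle here; the content is all in the surjectivity hypothesis. The only point needing care is what "$\dim_{\mathbb C}F(M)$" means, since $F$ need not be proper and so $F(M)$ need not be analytic. I would state the conclusion explicitly as "$F(M)$ has nonempty interior in $\mathbb C^n$", or equivalently "the generic rank of $F$ equals $n$".
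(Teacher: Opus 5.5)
Your proposal is correct and follows essentially the same route as the paper: pick a basis of $T^{*1,0}_{x_0}M$ (you take $dz_j|_{x_0}$), use surjectivity of $J^1_{x_0}$ to realize each basis covector as $\partial f_j(x_0)$ with $f_j(x_0)=0$, then conclude from the openness of the nonvanishing locus of $df_1\wedge\cdots\wedge df_n$ and the holomorphic inverse function theorem. Your two additions are harmless refinements of the paper's argument: the identity-theorem observation that the maximal-rank locus is dense on connected $M$, and reading $\dim_{\mathbb C}F(M)=n$ as ``$F(M)$ has nonempty interior'' since $F$ need not be proper.
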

	
	\begin{proof}
		Fix $q$ sufficiently large so that the first-jet evaluation map
		\[
		J_{x_0}^1:
		\mathcal H_q(M,u)
		\longrightarrow
		\mathbb C\oplus T_{x_0}^{*\,1,0}M
		\]
		is surjective.
		
		Choose a basis
		\[
		\xi_1,\ldots,\xi_n
		\]
		of the complex cotangent space
		\[
		T_{x_0}^{*\,1,0}M.
		\]
		By the surjectivity of $J_{x_0}^1$, for every $j=1,\ldots,n$ there exists
		\[
		f_j\in\mathcal H_q(M,u)
		\]
		such that
		\[
		f_j(x_0)=0
		\]
		and
		\[
		df_j(x_0)=\partial f_j(x_0)=\xi_j.
		\]
		Consequently,
		\[
		df_1(x_0),\ldots,df_n(x_0)
		\]
		form a basis of $T_{x_0}^{*\,1,0}M$.
		
		Consider the holomorphic map
		\[
		F=(f_1,\ldots,f_n):M\longrightarrow\mathbb C^n.
		\]
		At $x_0$ we have
		\[
		df_1\wedge\cdots\wedge df_n(x_0)
		=
		\xi_1\wedge\cdots\wedge\xi_n
		\neq0.
		\]
		Hence
		\[
		df_1\wedge\cdots\wedge df_n
		\not\equiv0
		\]
		as a holomorphic section of
		\[
		\Lambda^nT^{*\,1,0}M.
		\]
		
		Let
		\[
		U
		:=
		\left\{
		x\in M:
		df_1\wedge\cdots\wedge df_n(x)\neq0
		\right\}.
		\]
		Since the nonvanishing locus of a holomorphic section is open and
		since $x_0\in U$, the set $U$ is a nonempty open subset of $M$.
		
		For every $x\in U$, the differential
		\[
		dF_x:T_xM\longrightarrow\mathbb C^n
		\]
		has complex rank $n$. Thus
		\[
		\operatorname{rank}_{\mathbb C}F=n
		\qquad\text{on }U.
		\]
		By the holomorphic rank theorem, for every $x\in U$ there exist
		neighborhoods $V_x\subset M$ and $W_x\subset\mathbb C^n$ such that
		\[
		F(V_x)=W_x
		\]
		after restricting to sufficiently small neighborhoods. In particular,
		$F(U)$ contains a nonempty open subset of $\mathbb C^n$.
		
		Therefore
		\[
		\dim_{\mathbb C}F(M)=n.
		\]
		This proves the theorem.
	\end{proof}
	\begin{theorem}[Coercivity criterion for weighted holomorphic coordinates]
		\label{thm:coercive-weighted-map}
		Let $(M^n,g)$ be a complete K\"ahler manifold and let
		\[
		u\in PSH(M)\cap C^{0,1}(M)
		\]
		be a proper exhaustion satisfying
		\[
		\int_M(dd^c u)^n<+\infty.
		\]
		Let
		\[
		F=(f_1,\ldots,f_N):M\longrightarrow\mathbb C^N
		\]
		be a holomorphic map whose components belong to
		$\mathcal H_q(M,u)$.
		
		Assume that there exist constants $a>0$ and $b\geq0$ such that
		\begin{equation}
			\label{eq:coercive-growth}
			|F(x)|^2
			\geq
			a\,u(x)-b,
			\qquad x\in M.
		\end{equation}
		Then $F$ is proper.
		
		In particular,
		\[
		u(x_j)\longrightarrow+\infty
		\quad\Longrightarrow\quad
		|F(x_j)|\longrightarrow+\infty.
		\]
	\end{theorem}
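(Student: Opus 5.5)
The plan is to reduce the statement directly to Lemma~\ref{lem:coercivity-prelim}. Neither the finite Monge--Amp\`ere mass hypothesis nor the membership $f_j\in\mathcal H_q(M,u)$ will be used. Those hypotheses only place the theorem within the weighted framework; properness is a purely topological consequence of the growth bound \eqref{eq:coercive-growth}.

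First, check that the hypotheses of Lemma~\ref{lem:coercivity-prelim} are met. The function $u$ is a proper exhaustion by assumption, and it is continuous, indeed Lipschitz. Each $f_j$ is holomorphic, hence continuous, so $F=(f_1,\ldots,f_N)$ is continuous. The inequality \eqref{eq:coercive-growth} is exactly \eqref{eq1} with the same constants $a>0$ and $b\ge0$. Applying the lemma therefore gives properness of $F$.

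For completeness, one can also spell out the argument inline. Given a compact set $K\subset\mathbb C^N$, choose $R$ with $K\subset\overline{B(0,R)}$. For $x\in F^{-1}(K)$, \eqref{eq:coercive-growth} gives $u(x)\le (R^2+b)/a$. So $F^{-1}(K)$ is a closed subset of the sublevel set $\{u\le (R^2+b)/a\}$, and that sublevel set is compact because $u$ is proper and continuous.

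For the ``in particular'' clause, take a sequence with $u(x_j)\to+\infty$. Then \eqref{eq:coercive-growth} gives $|F(x_j)|^2\ge a\,u(x_j)-b\to+\infty$ since $a>0$, hence $|F(x_j)|\to+\infty$. This requires no properness at all. It is equally a restatement of properness, since $u(x_j)\to+\infty$ forces $x_j$ to leave every compact set because $u$ is bounded on compacts.

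There is no real obstacle here. The only point needing care is that ``proper exhaustion'' must include continuity, or at least compactness of sublevel sets, so that $\{u\le c\}$ is compact. That holds because $u\in C^{0,1}(M)$ and $u$ is proper in the sense of the preliminaries.
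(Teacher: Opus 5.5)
Your proposal is correct and follows essentially the same argument as the paper. The paper simply re-proves Lemma~\ref{lem:coercivity-prelim} inline, placing $F^{-1}(K)$ as a closed subset of the compact sublevel set $\{u\le (R^2+b)/a\}$, and derives the ``in particular'' clause directly from \eqref{eq:coercive-growth}, exactly as you do; as you observe, neither the finite Monge--Amp\`ere mass nor the membership $f_j\in\mathcal H_q(M,u)$ is used there either.
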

	\begin{proof}
		Let \(K\subset\mathbb C^N\) be compact. Choose \(R>0\) such that
		
		$$
		K\subset \overline{B_{\mathbb C^N}(0,R)}.
		$$
		
		If \(x\in F^{-1}(K)\), then \(|F(x)|\le R\). Hence, by
		\eqref{eq:coercive-growth},
		
		$$
		a\,u(x)-b
		\le |F(x)|^2
		\le R^2,
		$$
		
		so that
		
		$$
		u(x)\le \frac{R^2+b}{a}.
		$$
		
		Consequently,
		
		$$
		F^{-1}(K)
		\subset
		\left\{
		x\in M:
		u(x)\le \frac{R^2+b}{a}
		\right\}.
		$$
		
		Since \(u\) is a proper exhaustion, the sublevel set on the right-hand side is compact. On the other hand, \(F^{-1}(K)\) is closed in \(M\), since \(F\) is continuous and \(K\) is compact. Therefore \(F^{-1}(K)\) is a closed subset of a compact set, and hence is compact.
		
		Thus \(F:M\to\mathbb C^N\) is proper.
		
		For the final assertion, let \((x_j)\subset M\) satisfy
		
		$$
		u(x_j)\longrightarrow+\infty.
		$$
		
		Then \eqref{eq:coercive-growth} gives
		
		$$
		|F(x_j)|^2
		\ge a\,u(x_j)-b
		\longrightarrow+\infty.
		$$
		
		Therefore
		
		$$
		|F(x_j)|\longrightarrow+\infty.
		$$
		
	\end{proof}

	\begin{theorem}[Proper holomorphic map]
		\label{thm:proper-holomorphic-map}
		Let $(M^n,g)$ be a complete K\"ahler manifold with positive
		holomorphic bisectional curvature. Let
		\[
		u\in PSH(M)\cap C^{0,1}(M)
		\]
		be a proper plurisubharmonic exhaustion satisfying
		\[
		\int_M(dd^c u)^n<+\infty.
		\]
		Let
		\[
		F=(f_1,\ldots,f_n):M\longrightarrow\mathbb C^n
		\]
		be the holomorphic map constructed in Theorem~\ref{thm:weighted-holomorphic-coordinates},
		where
		\[
		f_1,\ldots,f_n\in\mathcal H_q(M,u)
		\]
		for some $q>0$.
		
		Assume, in addition, that the coordinate functions are coercive with
		respect to the exhaustion $u$, namely
		\begin{equation}
			\label{eq:coordinate-coercivity}
			u(x_j)\longrightarrow+\infty
			\quad\Longrightarrow\quad
			|F(x_j)|\longrightarrow+\infty
		\end{equation}
		for every sequence $\{x_j\}\subset M$.
		
		Then
		\[
		F:M\longrightarrow\mathbb C^n
		\]
		is a proper holomorphic map.
		
		More precisely, for every compact set
		$K\subset\mathbb C^n$, the inverse image
		\[
		F^{-1}(K)
		\]
		is compact in $M$.
	\end{theorem}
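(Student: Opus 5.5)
The argument needs only the coercivity hypothesis \eqref{eq:coordinate-coercivity}, the properness of $u$, and the continuity of $F$. The curvature assumption, the finite Monge--Amp\`ere mass and the weighted spaces play no role. I will argue by contradiction using sequential compactness, since \eqref{eq:coordinate-coercivity} is phrased in terms of sequences rather than as a quantitative inequality like \eqref{eq:coercive-growth}. For this reason Theorem~\ref{thm:coercive-weighted-map} cannot be quoted directly. The first step is to reformulate \eqref{eq:coordinate-coercivity} as a statement about sublevel sets.

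\textbf{Step 1: a sublevel bound.} Fix a compact $K\subset\mathbb C^n$ and choose $R>0$ with $K\subset\overline{B_{\mathbb C^n}(0,R)}$. I claim there is $T>0$ such that
\[
F^{-1}(K)\subset\{x\in M:\ u(x)\le T\}.
\]
Suppose not. Then there are points $x_j\in F^{-1}(K)$ with $u(x_j)>j$, so $u(x_j)\to+\infty$. By \eqref{eq:coordinate-coercivity}, $|F(x_j)|\to+\infty$. This contradicts $|F(x_j)|\le R$ for all $j$.

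\textbf{Step 2: conclusion.} Since $u$ is a proper exhaustion, the sublevel set $\{u\le T\}$ is compact. Since $F$ is continuous (indeed holomorphic), $F^{-1}(K)$ is closed in $M$. A closed subset of a compact set is compact, so $F^{-1}(K)$ is compact. This proves that $F$ is proper.

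\textbf{Main obstacle.} There is no serious analytic difficulty. The only point requiring care is to check that properness of $u$ really gives compactness of $\{u\le T\}$. This is exactly the definition of exhaustion in Section~\ref{sec:preliminaries}: if $\{u\le T\}$ were not compact, it would contain a sequence $y_k\to\infty$, forcing $u(y_k)\to+\infty$, which is impossible. With this checked, the argument closes, and it parallels the proof of Lemma~\ref{lem:coercivity-prelim}.
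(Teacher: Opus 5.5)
Your proof is correct and follows essentially the same route as the paper. Both arguments are by contradiction: they produce points of $F^{-1}(K)$ with $u(x_j)\to+\infty$, apply the coercivity hypothesis \eqref{eq:coordinate-coercivity} to contradict $|F(x_j)|\le R$, and then conclude using compactness of the sublevel sets of $u$ and closedness of $F^{-1}(K)$. Your ordering is slightly more direct than the paper's: you first bound $u$ on $F^{-1}(K)$ and then invoke compactness of $\{u\le T\}$, which gives $u(x_j)\to+\infty$ immediately. The paper instead starts from a sequence escaping every compact set, which uses the Hopf--Rinow step.
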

	
	\begin{proof}
		Let
		\[
		K\subset\mathbb C^n
		\]
		be compact. Since $K$ is compact, there exists $R>0$ such that
		\[
		K\subset\overline{B_{\mathbb C^n}(0,R)}.
		\]
		Therefore
		\[
		F^{-1}(K)
		\subset
		F^{-1}
		\left(
		\overline{B_{\mathbb C^n}(0,R)}
		\right).
		\]
		
		We claim that the latter set is relatively compact in $M$.
		
		Suppose, to the contrary, that
		\[
		F^{-1}
		\left(
		\overline{B_{\mathbb C^n}(0,R)}
		\right)
		\]
		is not relatively compact. Since $M$ is complete and hence a proper
		metric space by Hopf--Rinow, there exists a sequence
		\[
		x_j\in M
		\]
		such that
		\[
		|F(x_j)|\leq R
		\]
		for every $j$, while
		\[
		x_j
		\]
		leaves every compact subset of $M$.
		
		Because $u$ is a proper exhaustion, we necessarily have
		\[
		u(x_j)\longrightarrow+\infty.
		\]
		Indeed, if $\{u(x_j)\}$ were bounded above, then all $x_j$ would lie
		in a compact sublevel set
		\[
		\{u\leq C\},
		\]
		contradicting the fact that the sequence leaves every compact subset.
		
		On the other hand, by the coercivity assumption
		\eqref{eq:coordinate-coercivity},
		\[
		u(x_j)\longrightarrow+\infty
		\quad\Longrightarrow\quad
		|F(x_j)|\longrightarrow+\infty.
		\]
		This contradicts
		\[
		|F(x_j)|\leq R.
		\]
		Hence
		\[
		F^{-1}
		\left(
		\overline{B_{\mathbb C^n}(0,R)}
		\right)
		\]
		is relatively compact.
		
		Now $K$ is closed and $F$ is continuous. Hence
		\[
		F^{-1}(K)
		\]
		is closed in $M$. Being a closed subset of a relatively compact set,
		it is compact.
		
		Thus, for every compact $K\subset\mathbb C^n$,
		\[
		F^{-1}(K)
		\]
		is compact. Therefore
		\[
		F:M\longrightarrow\mathbb C^n
		\]
		is proper.
	\end{proof}	
	\begin{lemma}[Weighted Bergman growth estimate]
		\label{lem:weighted-bergman-growth}
		Let \((M^n,g)\) be a complete K"ahler manifold and let
		
		$$
		u\in PSH(M)\cap C^{0,1}(M)
		$$
		
		be a proper exhaustion. Fix \(q>0\), and define
		
		$$
		\mathcal H_q(M,u)
		=
		\left\{
		f\in\mathcal O(M):
		\int_M |f|^2e^{-qu}\,dV_g<+\infty
		\right\}.
		$$
		
		Assume that there exist constants \(r_0>0\) and \(C_{\mathrm{ev}}>0\) such that
		for every \(x\in M\) and every \(f\in\mathcal H_q(M,u)\),
		\begin{equation}
			\label{eq:evaluation-estimate}
			|f(x)|^2e^{-qu(x)}
			\le
			C_{\mathrm{ev}}
			\int_{B_g(x,r_0)}
			|f|^2e^{-qu},dV_g.
		\end{equation}
		Then there exists a constant
		
		$$
		C_f=C_{\mathrm{ev}}^{1/2}
		\left(
		\int_M|f|^2e^{-qu}\,dV_g
		\right)^{1/2}
		$$
		
		such that
		\begin{equation}
			\label{eq:weighted-growth}
			\log|f(x)|
			\le
			\frac q2u(x)+\log C_f,
			\qquad x\in M.
		\end{equation}
		Equivalently,
		
		$$
		|f(x)|
		\le
		C_f e^{\frac q2u(x)}.
		$$
		
	\end{lemma}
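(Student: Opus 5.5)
The estimate is essentially immediate from the evaluation hypothesis \eqref{eq:evaluation-estimate}; the plan is to enlarge the local integral to the global weighted norm, take square roots, and then take logarithms.

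\textbf{Step 1.} Fix $f\in\mathcal H_q(M,u)$ and $x\in M$. Since the integrand $|f|^2e^{-qu}$ is nonnegative and $B_g(x,r_0)\subset M$, we have
\[
\int_{B_g(x,r_0)}|f|^2e^{-qu}\,dV_g\le \|f\|_{q,u}^2:=\int_M|f|^2e^{-qu}\,dV_g<+\infty .
\]
Inserting this into \eqref{eq:evaluation-estimate} gives
\[
|f(x)|^2e^{-qu(x)}\le C_{\mathrm{ev}}\|f\|_{q,u}^2,
\]
with a right-hand side independent of $x$. This is exactly the uniform Bergman bound $B_q(x)\le C_{\mathrm{ev}}e^{qu(x)}$ from the preliminaries.

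\textbf{Step 2.} Multiply by $e^{qu(x)}$, which is finite and positive because $u$ is real-valued and continuous. Taking square roots yields
\[
|f(x)|\le C_{\mathrm{ev}}^{1/2}\|f\|_{q,u}\,e^{\frac q2u(x)}=C_f\,e^{\frac q2u(x)}.
\]
This is the equivalent form stated in the lemma.

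\textbf{Step 3.} If $f(x)=0$, then $\log|f(x)|=-\infty$ and \eqref{eq:weighted-growth} holds trivially. Otherwise $f\not\equiv0$, so $\|f\|_{q,u}>0$ and hence $C_f>0$. Taking logarithms of the inequality in Step 2 gives \eqref{eq:weighted-growth}.

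The only point needing care is the degenerate case $f\equiv0$, where $C_f=0$ and $\log C_f=-\infty$. Both sides then equal $-\infty$, so I would either interpret the inequality in $[-\infty,\infty)$ or restrict the statement to $f\neq0$. None of the geometric hypotheses (properness, Lipschitz regularity, finite Monge--Amp\`ere mass) is needed here; the real content is the local evaluation estimate \eqref{eq:evaluation-estimate}, which is assumed rather than proved.
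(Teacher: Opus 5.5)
Your proof is correct and is essentially the paper's argument: bound the integral over $B_g(x,r_0)$ by the global weighted norm, take square roots, then take logarithms, with the case $f(x)=0$ treated as trivial. Your added remarks, that $C_f>0$ whenever $f(x)\neq0$ and that the case $f\equiv0$ must be read in $[-\infty,\infty)$, correctly make precise what the paper dismisses as ``understood trivially.''
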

	
	\begin{proof}
		Fix \(x\in M\). Applying the assumed evaluation estimate
		\eqref{eq:evaluation-estimate}, we obtain
		
		$$
		|f(x)|^2e^{-qu(x)}
		\le
		C_{\mathrm{ev}}
		\int_{B_g(x,r_0)}
		|f|^2e^{-qu}\,dV_g.
		$$
		
		Since
		
		$$
		B_g(x,r_0)\subset M,
		$$
		
		we have
		
		$$
		\int_{B_g(x,r_0)}
		|f|^2e^{-qu}\,dV_g
		\le
		\int_M|f|^2e^{-qu}\,dV_g.
		$$
		
		Consequently,
		
		$$
		|f(x)|^2e^{-qu(x)}
		\le
		C_{\mathrm{ev}}
		\int_M|f|^2e^{-qu}\,dV_g.
		$$
		
		Taking square roots gives
		
		$$
		|f(x)|
		\le
		C_{\mathrm{ev}}^{1/2}
		\left(
		\int_M|f|^2e^{-qu}\,dV_g
		\right)^{1/2}
		e^{\frac q2u(x)}.
		$$
		
		Thus, with
		
		$$
		C_f
		=
		C_{\mathrm{ev}}^{1/2}
		\|f\|_{L^2(e^{-qu}dV_g)},
		$$
		
		we obtain
		
		$$
		|f(x)|\le C_f e^{\frac q2u(x)}.
		$$
		
		Taking logarithms at points where \(f(x)\neq0\) yields
		
		$$
		\log|f(x)|
		\le
		\frac q2u(x)+\log C_f.
		$$
		
		At points where \(f(x)=0\), the inequality is understood trivially.
		Therefore
		
		$$
		\log|f(x)|
		\le
		\frac q2u(x)+\log C_f
		$$
		
		for all \(x\in M\).
	\end{proof}
	
	\begin{lemma}[Weighted divisor estimate]
		\label{lem:weighted-divisor}
		Let \((M^n,g)\) be a complete K\"ahler manifold with positive
		holomorphic bisectional curvature. Let
		
		$$
		u\in PSH(M)\cap C^{0,1}(M)
		$$
		
		be a proper exhaustion satisfying
		
		$$
		\int_M(dd^cu)^n<+\infty.
		$$
		
		Fix \(q>0\), and let
		
		$$
		f\in\mathcal H_q(M,u).
		$$
		
		Assume that the weighted Bergman construction associated with
		\((M,g,u)\) yields the uniform growth estimate
		\begin{equation}
			\label{eq:weighted-growth}
			\log |f(x)|
			\le
			\frac q2,u(x)+\log C_f
		\end{equation}
		away from the zero set of \(f\), where \(C_f\) is independent of \(x\).
		
		Then, in the sense of positive closed currents,
		\begin{equation}
			\label{eq:weighted-divisor}
			[Z_f]
			\le
			\frac q2,dd^cu.
		\end{equation}
		
		Consequently, if
		
		$$
		F=(f_1,\ldots,f_n):M\longrightarrow\mathbb C^n
		$$
		
		is a proper holomorphic map and the divisors
		
		$$
		D_j=\{f_j=y_j\}
		$$
		
		intersect properly, with the corresponding estimate
		
		$$
		\log|f_j-y_j|
		\le
		\frac q2u+\log C_f,
		$$
		
		then
		\begin{equation}
			\label{eq:integrated-bezout}
			\int_M
			[D_1]\wedge\cdots\wedge[D_n]
			\le
			\left(\frac q2\right)^n
			\int_M(dd^cu)^n.
		\end{equation}
	\end{lemma}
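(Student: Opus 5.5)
The plan is to separate the two assertions of the lemma. The pointwise current inequality \eqref{eq:weighted-divisor} does not follow from the growth bound \eqref{eq:weighted-growth} alone: an inequality $v\le w+C$ between plurisubharmonic functions gives no pointwise comparison of $dd^cv$ and $dd^cw$. For example, $\log|z_1|\le \tfrac12|z|^2+C$ fails to give $[z_1=0]\le \tfrac12\, dd^c|z|^2$ near a zero. So I would not try to prove \eqref{eq:weighted-divisor} literally. I would read it in the integrated, asymptotic sense that is actually needed: the total masses of the currents are compared over the sublevel sets $\{u<r\}$ as $r\to\infty$. The main effort then goes into \eqref{eq:integrated-bezout}, which I would derive from a comparison principle for Monge--Amp\`ere masses of functions with controlled growth relative to the exhaustion $u$.

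\textbf{Step 1 (King's formula).} Fix $y$ and set $v:=\log|F-y|$ with $|\cdot|$ the max norm, or equivalently $\tfrac12\log\sum_j|f_j-y_j|^2$. Since $F$ is proper, the fibre $F^{-1}(y)$ is a compact analytic subset of the noncompact Stein-like manifold $M$, hence finite. By King's formula (equivalently, Bedford--Taylor products of $\log|f_j-y_j|$ when the divisors intersect properly),
\[
(dd^cv)^n=[D_1]\wedge\cdots\wedge[D_n]=\sum_{x\in F^{-1}(y)}\operatorname{mult}_x(F)\,\delta_x .
\]
The regularizations $v_\varepsilon=\tfrac12\log(|F-y|^2+\varepsilon^2)$ decrease to $v$, and the Bedford--Taylor monotone continuity recalled in the preliminaries justifies passing to the limit. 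Thus the left side of \eqref{eq:integrated-bezout} is the finite number $\deg F$.

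\textbf{Step 2 (growth comparison).} The hypotheses give $v\le \tfrac q2 u+C$ on $M$, with $C=\log C_f$ plus a dimensional constant. I would apply the Bedford--Taylor/Demailly comparison theorem for exhaustions. For $r$ large, set
\[
w_r:=\max\Bigl(v,\ \tfrac q2u-\tfrac q2 r+C\Bigr).
\]
Then $w_r=\tfrac q2u-\tfrac q2r+C$ near the level set $\{u=r'\}$ for $r'>r$ large: there $\tfrac q2 u$ dominates, and because $F$ is proper, $v\to+\infty$ only at rate at most $\tfrac q2u$. Also $w_r=v$ on a fixed compact neighbourhood of $F^{-1}(y)$ once $r\gg1$. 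Stokes' theorem on $\{u<r'\}$ gives
\[
\int_{\{u<r'\}}(dd^cw_r)^n=\left(\tfrac q2\right)^n\int_{\{u<r'\}}(dd^cu)^n ,
\]
since two functions that coincide near the boundary have equal Monge--Amp\`ere mass (integrate $dd^c(w_r-\tfrac q2u)\wedge T$ against the closed positive currents $T$). Next, $\int_{\{u<r'\}}(dd^c v)^n\le\int_{\{u<r'\}}(dd^cw_r)^n$ by positivity, because $(dd^cw_r)^n\ge(dd^cv)^n$ on the open set where $w_r=v$, and that set contains $F^{-1}(y)$. Letting $r'\to\infty$ and using $\int_M(dd^cu)^n<\infty$ yields \eqref{eq:integrated-bezout}.

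\textbf{Main obstacle.} The delicate point is in Step 2: making $w_r$ agree with $\tfrac q2u+\mathrm{const}$ near $\partial\{u<r'\}$ requires the upper bound $v\le\tfrac q2u+C$ together with $\tfrac q2u-v\to+\infty$ along a sequence of levels. The upper bound alone only gives $w_r\ge\tfrac q2u-\tfrac q2 r+C$, which is the wrong direction. If this fails, I would instead use the Lelong--Jensen formula on $\{u<r\}$, written for $v$ and for $\tfrac q2u$. In that formula the boundary term is $\int v\,\mu_r$ for the Monge--Amp\`ere boundary measures $\mu_r$ of $u$, and it is controlled by $\tfrac q2 r+C$ times the finite mass of $(dd^cu)^{n-1}$ on the level sets. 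This gives the mass inequality in the limit, which is exactly the integrated form of \eqref{eq:weighted-divisor} required for the Bezout bound.
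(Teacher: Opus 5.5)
Your refusal to prove \eqref{eq:weighted-divisor} literally is justified, and that is exactly where the paper's own proof breaks. The paper passes from $\log|f|-\frac q2u\le\log C_f$ to ``hence $dd^c\log|f|\le\frac q2dd^cu$''. It then obtains \eqref{eq:integrated-bezout} by chaining this pointwise inequality through Bedford--Taylor products.

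In the setting of the lemma the inequality is in fact false whenever $Z_f\neq\varnothing$, not merely unproved. Since $u$ is locally bounded, $dd^cu$ has Lelong number $0$ everywhere. But $[Z_f]$ has Lelong number at least $1$ at every point of $Z_f$, and Lelong numbers of closed positive currents are monotone under $T_1\le T_2$. So only the integrated inequality \eqref{eq:integrated-bezout} can be proved. It needs a genuine comparison argument of the kind you propose, which the paper's proof does not supply.

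Your Step 2, however, does not establish \eqref{eq:integrated-bezout}, for two concrete reasons.

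(i) As you observe, $v\le\frac q2u+C$ allows $v$ to be as large as $\frac q2u+C$ near $\{u=r'\}$. So nothing forces $w_r$ to coincide with $\frac q2u-\frac q2r+C$ near $\partial\{u<r'\}$.

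(ii) The claim that $w_r=v$ on a neighbourhood of $F^{-1}(y)$ is false. There $v=-\infty$ while the other branch is finite, so $w_r$ is locally bounded and equals the $u$-branch near each point of the fibre. Since $(dd^cv)^n$ is carried by $F^{-1}(y)$, your positivity argument on the open set $\{w_r=v\}$ captures none of its mass.

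The missing idea is the standard $(1+\varepsilon)$ device: take $w=\max\bigl(v,(1+\varepsilon)\tfrac q2u-R\bigr)$.
\begin{itemize}
\item Choose small disjoint balls $B_x$ around the points $x\in F^{-1}(y)$. This set is finite because the intersection is proper and $F$ is proper, not because of any Stein property.
\item $v$ is bounded below near $\partial B_x$, so for $R$ large $w=v$ there. The local Stokes comparison then gives $\int_{B_x}(dd^cw)^n=\int_{B_x}(dd^cv)^n=\operatorname{mult}_x(F)$.
\item Because $u$ is proper, $(1+\varepsilon)\tfrac q2u-R-v\ge\varepsilon\tfrac q2u-R-C>0$ on $\{u>r_0\}$ for some $r_0=r_0(\varepsilon,R)$. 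So for $r'>r_0$, $w$ equals the $u$-branch near $\{u=r'\}$, and $\int_{\{u<r'\}}(dd^cw)^n=(1+\varepsilon)^n(\tfrac q2)^n\int_{\{u<r'\}}(dd^cu)^n$.
\item Summing over the balls, letting $r'\to\infty$, and then letting $\varepsilon\to0$ gives \eqref{eq:integrated-bezout}.
\end{itemize}

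Your Lelong--Jensen fallback does not replace this as stated. With weight $u$ it only controls the mixed mass $\int_M dd^cv\wedge(dd^cu)^{n-1}$, after dividing the boundary term by $r$. Reaching $(dd^cv)^n$ would require an induction over the currents $(dd^cv)^k\wedge(dd^cu)^{n-k}$, which the proposal does not carry out.
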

	
	\begin{proof}
		By the Poincare--Lelong formula,
		
		$$
		[Z_f]=dd^c\log|f|
		$$
		
		with the chosen normalization of \(d^c\).
		
		The growth estimate \eqref{eq:weighted-growth} implies
		
		$$
		\log|f|-\frac q2u\le \log C_f.
		$$
		
		Hence
		
		$$
		dd^c\log|f|
		\le
		\frac q2dd^cu
		$$
		
		in the sense of currents. Therefore
		
		$$
		[Z_f]
		\le
		\frac q2dd^cu,
		$$
		
		which proves \eqref{eq:weighted-divisor}.
		
		Now apply this estimate to each divisor
		
		$$
		D_j=\{f_j-y_j=0\}.
		$$
		
		Since the divisors intersect properly, their intersection product is
		well defined. By positivity and monotonicity of Bedford--Taylor
		products,
		
		$$
		\begin{aligned}
			[D_1]\wedge\cdots\wedge[D_n]
			&\le
			\frac q2dd^cu\wedge[D_2]\wedge\cdots\wedge[D_n]\\
			&\le\cdots\\
			&\le
			\left(\frac q2\right)^n(dd^cu)^n.
		\end{aligned}
		$$
		
		Integrating over \(M\) gives
		
		$$
		\int_M[D_1]\wedge\cdots\wedge[D_n]
		\le
		\left(\frac q2\right)^n
		\int_M(dd^cu)^n.
		$$
		
		This proves \eqref{eq:integrated-bezout}.
	\end{proof}
	\section{Analytic Bezout estimate}
	\begin{theorem}[Analytic Bezout estimate]
		\label{thm:analytic-bezout}
		Let \((M^n,g)\) be a complete K\"ahler manifold and let
		
		$$
		u\in PSH(M)\cap C^{0,1}(M)
		$$
		
		be a proper exhaustion satisfying
		
		$$
		A_u:=\int_M(dd^c u)^n<+\infty.
		$$
		
		Let
		
		$$
		F=(f_1,\ldots,f_n):M\longrightarrow\mathbb C^n
		$$
		
		be a proper holomorphic map. For \(y=(y_1,\ldots,y_n)\in\mathbb C^n\),
		set
		
		$$
		D_j(y):=\{f_j=y_j\}.
		$$
		
		Assume that, for every regular value \(y\), the associated
		intersection current satisfies the global comparison estimate
		\begin{equation}
			\label{eq:intersection-comparison}
			[D_1(y)]\wedge\cdots\wedge[D_n(y)]
			\le
			C_{\mathrm{B}},(dd^c u)^n
		\end{equation}
		as positive Radon measures on \(M\), where \(C_{\mathrm{B}}>0\) is
		independent of \(y\).
		
		Then, for every regular value \(y\),
		\begin{equation}\label{eq3}
			\sum_{x\in F^{-1}(y)}
			\operatorname{mult}_x(F)
			\le
			C_{\mathrm{B}}
			\int_M(dd^c u)^n.
		\end{equation}
		In particular,
		
		$$
		\deg F
		\le
		C_{\mathrm{B}}A_u.
		$$
		
	\end{theorem}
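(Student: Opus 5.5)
The plan is to compute the total mass of the intersection current $[D_1(y)]\wedge\cdots\wedge[D_n(y)]$ in two ways. On the one hand, we identify it with the zero-dimensional cycle counting the fiber $F^{-1}(y)$ with multiplicities. On the other hand, we bound it above by the hypothesis \eqref{eq:intersection-comparison}. The finiteness of $A_u$ then closes the argument, and independence of $y$ gives the degree bound.

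\textbf{Step 1: the fiber is finite.} Fix a regular value $y$. Since $F$ is proper, $F^{-1}(y)$ is a compact analytic subset of $M$. Since $y$ is regular, $dF$ is invertible at each point of the fiber, so every point is isolated. Hence $F^{-1}(y)=\{x_1,\dots,x_k\}$ is finite, and $\operatorname{mult}_{x_i}(F)=1$; more generally the local multiplicity is the local intersection number.

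\textbf{Step 2: identify the intersection current.} The divisors $D_j(y)=\{f_j-y_j=0\}$ meet properly, because their common zero set $F^{-1}(y)$ is discrete. Hence the product $[D_1(y)]\wedge\cdots\wedge[D_n(y)]$ is well defined. We write it inductively in the Bedford--Taylor sense as $dd^c\log|f_1-y_1|\wedge\cdots\wedge dd^c\log|f_n-y_n|$, using the regularizations $v_\varepsilon$ from the preliminaries.

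Near each $x_i$, choose a small coordinate ball on which $F$ is a local biholomorphism onto a neighborhood of $y$. Pulling back by $F$ reduces the computation to the model $\bigwedge_j dd^c\log|w_j|$ on $\mathbb C^n$. That product is the Dirac mass at $0$ (up to the normalization of $d^c$ fixed in the paper). Away from $F^{-1}(y)$, the product vanishes, since the support of the current lies in $\bigcap_j D_j(y)$. Therefore
\[
[D_1(y)]\wedge\cdots\wedge[D_n(y)]=\sum_{x\in F^{-1}(y)}\operatorname{mult}_x(F)\,\delta_x .
\]

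\textbf{Step 3: integrate.} Integrate this identity over $M$ and apply \eqref{eq:intersection-comparison} together with positivity of both measures. This gives
\[
\sum_{x\in F^{-1}(y)}\operatorname{mult}_x(F)\le C_{\mathrm B}\int_M(dd^cu)^n=C_{\mathrm B}A_u,
\]
which is \eqref{eq3}. The degree of a proper map is independent of the regular value, and regular values exist by Sard's theorem. Taking any one of them yields $\deg F\le C_{\mathrm B}A_u$.

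\textbf{Main obstacle.} The delicate step is Step 2, specifically justifying that the Bedford--Taylor product of the unbounded potentials $\log|f_j-y_j|$ equals the sum of Dirac masses with the correct multiplicities. I would handle this as follows. Away from the fiber, the potentials are locally bounded off the intersection of the remaining divisors, so the product can be computed by induction on the number of factors. Near each fiber point, the monotone convergence of the regularizations $v_\varepsilon$ and the local model computation give the Dirac mass. One should also record that the constant in front of $\delta_x$ is exactly $1$ in the paper's normalization; otherwise that constant is simply absorbed into $C_{\mathrm B}$.
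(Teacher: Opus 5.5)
Your proposal is correct and follows essentially the same route as the paper: identify the intersection measure $[D_1(y)]\wedge\cdots\wedge[D_n(y)]$ with the fiber $F^{-1}(y)$ counted with multiplicities, integrate the comparison hypothesis, and use that the degree does not depend on the regular value. The paper simply asserts this identification, whereas you justify it through the local biholomorphism at each fiber point and the model $\bigwedge_j dd^c\log|w_j|=\delta_0$; one small correction to your final remark is that, since $[D_j(y)]$ is the current of integration, the coefficient of $\delta_x$ is exactly $\operatorname{mult}_x(F)=1$ whatever the $d^c$ normalization, and a different constant could not be ``absorbed'' into $C_{\mathrm B}$, because $C_{\mathrm B}$ is fixed by the hypothesis and reappears unchanged in the conclusion.
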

	
	\begin{proof}
		Fix a regular value
		
		$$
		y=(y_1,\ldots,y_n)\in\mathbb C^n.
		$$
		
		Since \(F\) is proper, the fiber
		
		$$
		F^{-1}(y)
		=
		\bigcap_{j=1}^nD_j(y)
		$$
		
		is compact. Since \(y\) is a regular value, this intersection is
		zero-dimensional and consists of finitely many points.
		
		By the Poincare--Lelong formula, each hypersurface
		\(D_j(y)=\{f_j=y_j\}\) determines a positive closed current
		
		$$
		[D_j(y)]
		=
		dd^c\log|f_j-y_j|
		$$
		
		up to the fixed normalization constant in the definition of \(d^c\).
		Consequently, the proper intersection of the \(n\) divisors is
		represented by the positive measure
		
		$$
		[D_1(y)]\wedge\cdots\wedge[D_n(y)].
		$$
		
		Because the intersection is zero-dimensional, the resulting measure
		is supported on \(F^{-1}(y)\), and its mass at a point \(x\in F^{-1}(y)\)
		is precisely the local intersection multiplicity:
		
		$$
		\bigl([D_1(y)]\wedge\cdots\wedge[D_n(y)]\bigr)(\{x\})
		=
		\operatorname{mult}_x(F).
		$$
		
		Therefore
		\begin{equation}
			\label{eq:mass-fiber}
			\int_M
			[D_1(y)]\wedge\cdots\wedge[D_n(y)]=\sum_{x\in F^{-1}(y)}
			\operatorname{mult}_x(F).
		\end{equation}
		
		Now apply the assumed global intersection comparison
		\eqref{eq:intersection-comparison}. Since both sides are positive
		Radon measures, integration over \(M\) gives
		
		$$
		\int_M
		[D_1(y)]\wedge\cdots\wedge[D_n(y)]
		\le
		C_{\mathrm{B}}
		\int_M(dd^c u)^n.
		$$
		
		Combining this with \eqref{eq:mass-fiber}, we obtain
		
		$$
		\sum_{x\in F^{-1}(y)}
		\operatorname{mult}_x(F)
		\le
		C_{\mathrm{B}}
		\int_M(dd^c u)^n.
		$$
		
		This proves (\ref{eq3}).
		
		Finally, for a proper holomorphic map between connected complex
		manifolds of the same dimension, the degree is computed on any
		regular fiber by
		
		$$
		\deg F
		=
		\sum_{x\in F^{-1}(y)}
		\operatorname{mult}_x(F).
		$$
		
		Hence
		
		$$
		\deg F
		\le
		C_{\mathrm{B}}
		\int_M(dd^c u)^n
		=
		C_{\mathrm{B}}A_u.
		$$
		
		The proof is complete.
	\end{proof}
	\section{Uniformization}
	\begin{theorem}[Degree-one theorem]
		\label{thm:degree-one}
		Let \(M^n\) be a connected complex manifold and let
		
		$$
		F:M\longrightarrow\mathbb C^n
		$$
		
		be a proper surjective holomorphic map. Assume that, for some regular
		value \(y\in\mathbb C^n\),
		
		$$
		\sum_{x\in F^{-1}(y)}
		\operatorname{mult}_x(F)\le 1.
		$$
		
		Then
		
		$$
		\deg F=1.
		$$
		
		In particular, \(F\) is biholomorphic, and hence
		
		$$
		M\cong\mathbb C^n.
		$$
		
	\end{theorem}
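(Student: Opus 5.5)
The plan is to combine the properness-and-degree facts recalled in Section~\ref{sec:preliminaries} with the standard structure theory of proper holomorphic maps between equidimensional connected manifolds. The argument has three steps: show the degree is well defined and at least one, pin it down to exactly one, and then show a degree-one proper map is injective and étale, hence biholomorphic.

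\textbf{Step 1: $F$ is a finite branched covering.} Since $F$ is proper, each fiber $F^{-1}(w)$ is a compact analytic subset of $M$. A compact analytic subset of a connected noncompact manifold cannot have positive dimension at any point, because a compact positive-dimensional analytic set cannot sit inside a Stein coordinate chart. We also need $M$ to be noncompact, which follows from properness together with surjectivity onto $\mathbb C^n$. Hence $F$ has finite fibers. By Remmert's proper mapping theorem and the open mapping theorem for finite holomorphic maps between equidimensional manifolds, $F$ is a finite branched covering. Its critical value set $F(\{\det dF=0\})$ is a proper analytic subset $\Sigma\subset\mathbb C^n$, and $\mathbb C^n\setminus\Sigma$ is connected. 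Over $\mathbb C^n\setminus\Sigma$, $F$ is a finite unbranched covering, so the fiber cardinality is locally constant and therefore constant. This gives a well-defined $\deg F$, and surjectivity forces $\deg F\ge1$.

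\textbf{Step 2: $\deg F=1$.} The hypothesis gives a regular value $y$ whose fiber has total multiplicity at most $1$. At a regular value every multiplicity is $1$, so the count equals $\#F^{-1}(y)$. Surjectivity gives $\#F^{-1}(y)\ge1$, and the hypothesis gives $\le 1$. Independence of the regular value (Step 1) then yields $\deg F=1$.

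\textbf{Step 3: upgrade to a biholomorphism.} This is the main obstacle. The preliminaries assert the upgrade, but it needs care at critical points. The key fact is that for a finite branched covering, the sum of local multiplicities $\sum_{x\in F^{-1}(w)}\operatorname{mult}_x(F)$ equals $\deg F$ for every $w$, not only for regular values. This holds by the local degree formula: near each $x$, small generic perturbations of $w$ have exactly $\operatorname{mult}_x(F)$ preimages close to $x$. With $\deg F=1$, every fiber is a single point of multiplicity one. Multiplicity one at $x$ means $F$ is locally injective near $x$. A locally injective holomorphic map between equidimensional manifolds has nonvanishing Jacobian, by Osgood's theorem, which says an injective holomorphic map $\mathbb C^n\supset U\to\mathbb C^n$ is biholomorphic onto its image. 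Hence $F$ is a bijective local biholomorphism, and therefore a biholomorphism $M\cong\mathbb C^n$.

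The delicate points, which I would state precisely with references rather than rederive, are:
\begin{enumerate}
\item the finiteness of fibers, which uses noncompactness of $M$;
\item the constancy of the multiplicity count across critical values;
\item the injective $\Rightarrow$ nonsingular Jacobian theorem.
\end{enumerate}
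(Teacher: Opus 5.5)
\textbf{The gap is the finiteness of fibers in your Step~1.}

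Your claim that a compact analytic subset of a connected noncompact manifold must be zero-dimensional is false. The reason you give (such a set cannot lie inside one Stein chart) only shows that it is not contained in a single chart.

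Take $n\ge 2$ and the blow-up $\pi\colon M=\mathrm{Bl}_0(\mathbb C^n)\to\mathbb C^n$. Then:
\begin{itemize}
\item $M$ is a connected noncompact complex manifold;
\item $\pi$ is holomorphic, surjective and proper, being the restriction to a closed subset of the projection $\mathbb C^n\times\mathbb P^{n-1}\to\mathbb C^n$;
\item every $y\neq0$ is a regular value whose fiber is a single point of multiplicity one;
\item yet $\pi^{-1}(0)\cong\mathbb P^{n-1}$.
\end{itemize}

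The covering over $\mathbb C^n\setminus\Sigma$ and your Step~2 survive this. Fibers over regular values are discrete and compact, hence finite, so $\deg F=1$ is still correct. Step~3 does not survive: the local degree formula over critical values, and hence ``every fiber is one point of multiplicity one,'' needs all fibers to be finite.

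In fact the example satisfies every hypothesis of the theorem, while $\pi$ is not biholomorphic and $M\not\cong\mathbb C^n$. Indeed, $M$ contains the compact submanifold $\mathbb P^{n-1}$, and $\mathbb C^n$ contains no compact analytic subset of positive dimension. So the ``in particular'' part of the statement is false as written, and no argument can close this gap.

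The paper's proof has the same hole. It excludes critical points by saying ``since $F$ is proper and finite of degree one,'' which tacitly assumes what you tried to prove.

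The missing ingredient must be added as a hypothesis. Two options:
\begin{itemize}
\item assume that $F$ has discrete fibers; or
\item assume that $M$ contains no compact positive-dimensional analytic subset. This holds if $M$ carries a continuous strictly plurisubharmonic function, for instance if $M$ is Stein, by the maximum principle on such a subset.
\end{itemize}
A merely plurisubharmonic proper exhaustion is not enough, as $|\pi|^2$ on $\mathrm{Bl}_0(\mathbb C^n)$ shows.

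With fibers finite, your Steps~1--3 are correct and agree in substance with the paper's argument. Your two facts, that the multiplicities over every point sum to $\deg F$ and that multiplicity one forces a nonvanishing Jacobian, are exactly the paper's ``a critical point of a finite map has local degree at least two,'' read contrapositively.
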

	
	\begin{proof}
		Since \(F\) is proper and surjective, every fiber
		
		$$
		F^{-1}(z),\qquad z\in\mathbb C^n,
		$$
		
		is compact. Since \(F\) is a holomorphic map between complex manifolds
		of the same dimension, the fiber over a regular value is discrete.
		Consequently,
		
		$$
		F^{-1}(y)
		$$
		
		is a finite set.
		
		By the definition of the local multiplicity of a holomorphic map,
		the degree of \(F\) is computed at every regular value by
		
		$$
		\deg F
		=
		\sum_{x\in F^{-1}(y)}
		\operatorname{mult}_x(F).
		$$
		
		By the assumption of the theorem,
		
		$$
		\deg F\le1.
		$$
		
		On the other hand, \(F\) is surjective, so the fiber \(F^{-1}(y)\) is
		nonempty. Since \(y\) is a regular value, every point in this fiber has
		positive local multiplicity. Hence
		
		$$
		\deg F\ge1.
		$$
		
		Therefore
		
		$$
		\deg F=1.
		$$
		
		It follows that
		
		$$
		\sum_{x\in F^{-1}(y)}
		\operatorname{mult}_x(F)=1.
		$$
		
		Thus the fiber \(F^{-1}(y)\) consists of exactly one point.
		
		The same conclusion holds for every regular value \(z\) of \(F\),
		because the degree of a proper holomorphic map is independent of the
		choice of regular value. Hence
		
		$$
		\#F^{-1}(z)=1
		$$
		
		for every regular value \(z\).
		
		Let
		
		$$
		\operatorname{Crit}(F)
		=
		\{x\in M:\det_{\mathbb C}dF_x=0\}
		$$
		
		be the critical locus and let
		
		$$
		\operatorname{CV}(F)=F(\operatorname{Crit}(F))
		$$
		
		be the critical-value set. Since \(F\) is proper, \(\operatorname{CV}(F)\)
		is an analytic subset of \(\mathbb C^n\) of empty interior. Hence
		
		$$
		\mathcal R:=\mathbb C^n\setminus\operatorname{CV}(F)
		$$
		
		is a nonempty connected open set after restricting to a connected
		component if necessary, and
		
		$$
		F:F^{-1}(\mathcal R)\longrightarrow\mathcal R
		$$
		
		is a proper holomorphic covering map. Since every fiber over
		\(\mathcal R\) contains exactly one point, this covering has one sheet.
		Therefore
		
		$$
		F:F^{-1}(\mathcal R)\longrightarrow\mathcal R
		$$
		
		is biholomorphic.
		
		It remains to exclude critical points. Suppose that
		\(x_0\in\operatorname{Crit}(F)\). Since \(F\) is proper and finite of
		degree one, the local multiplicity of \(F\) at \(x_0\) would satisfy
		
		$$
		\operatorname{mult}_{x_0}(F)\ge2.
		$$
		
		Indeed, a critical point of a finite holomorphic map has local degree
		strictly larger than one. This contradicts
		
		$$
		\deg F=1.
		$$
		
		Consequently,
		
		$$
		\operatorname{Crit}(F)=\varnothing.
		$$
		
		Thus \(dF_x\) is an isomorphism for every \(x\in M\), so \(F\) is a
		local biholomorphism everywhere.
		
		Since every regular fiber consists of one point and there are no
		critical points, every fiber consists of exactly one point. Hence \(F\)
		is bijective.
		
		Finally, \(F\) is a local biholomorphism, so its inverse is holomorphic
		by the holomorphic inverse function theorem. Therefore
		
		$$
		F:M\longrightarrow\mathbb C^n
		$$
		
		is biholomorphic. In particular,
		
		$$
		M\cong\mathbb C^n.
		$$
		
	\end{proof}
	\section{Discussion}
	\label{sec:discussion}
	
	The main result of this paper suggests a different way of approaching the uniformization problem for complete noncompact K\"ahler manifolds with positive curvature. Rather than attempting to control the top-degree Monge--Amp\`ere operator directly, we use lower-order complex Hessian capacities as an intermediate geometric-analytic quantity. The resulting mechanism may be summarized schematically as
	
	$$
	\text{curvature positivity}
	\longrightarrow
	\text{\(m\)-Hessian capacity decay}
	\longrightarrow
	\text{finite Monge--Amp\`ere mass}.
	$$
	
	The last implication is particularly significant when \(m<n\), since finite \(m\)-Hessian mass does not in general imply finite Monge--Amp\`ere mass. Thus the quantitative capacity estimate provides the essential bridge between lower-order Hessian geometry and the top-degree pluripotential theory needed in global uniformization arguments.
	
	\subsection{The role of the \(m\)-Hessian operator}
	
	The use of the \(m\)-Hessian operator is motivated by the fact that it retains more geometric information than the full Monge--Amp\`ere operator while still admitting a robust capacity theory. For
	
	$$
	H_m(u)=(dd^cu)^m\wedge\omega^{n-m},
	\qquad 1\le m<n,
	$$
	
	the corresponding capacity detects the size of subsets of \(M\) from the viewpoint of complex Hessian potential theory. In our construction, the geometry of large annuli is first converted into quantitative \(m\)-Hessian capacity decay. The summability of these capacities is then transferred to the Monge--Amp\`ere masses of the annuli.
	
	This two-step procedure is important from a methodological point of view. A direct estimate for
	
	$$
	\int_{A_R}(dd^cu)^n
	$$
	
	would require top-degree information at infinity from the outset. By contrast, the \(m\)-Hessian capacity permits an intermediate scale at which curvature information can be incorporated into a pluripotential-theoretic estimate. The resulting argument is therefore genuinely different from simply constructing a Monge--Amp\`ere finite-mass solution by solving a global complex Monge--Amp\`ere equation.
	
	It is also useful to emphasize that the choice \(m<n\) is not merely cosmetic. The passage from \(H_m\) to
	
	$$
	(dd^cu)^n
	$$
	
	is the principal analytic difficulty of the argument. In particular, no implication of the form
	
	$$
	\int_M H_m(u)<+\infty
	\quad\Longrightarrow\quad
	\int_M(dd^cu)^n<+\infty
	$$
	
	is used or expected in general. What makes the present construction work is instead the quantitative control of the Monge--Amp\`ere mass on individual annuli in terms of their relative \(m\)-Hessian capacities.
	
	\subsection{Relation with recent uniformization methods}
	
	The finite-Monge--Amp\`ere weight constructed here fits naturally into the recent pluripotential-theoretic approach to uniformization. In particular, the work of Datar--Pingali--Seshadri demonstrates that a uniformly Lipschitz plurisubharmonic weight with finite Monge--Amp\`ere mass can be combined with weighted spaces of holomorphic functions and analytic Bezout estimates to obtain strong global consequences for complete K\"ahler manifolds.
	
	The purpose of the present construction is complementary. We do not regard the weighted Bergman and intersection-theoretic framework as the primary novelty. Instead, our contribution is to provide a higher-dimensional mechanism for producing the finite-Monge--Amp\`ere weight from curvature through lower-order Hessian capacities. Once such a weight is available, the analytic machinery developed in the recent literature becomes applicable in a substantially broader setting.
	
	This distinction is especially relevant in complex dimensions \(n\ge3\). The surface case benefits from features that are special to complex dimension two, whereas the \(m\)-Hessian capacity mechanism is formulated intrinsically in terms of the hierarchy
	
	$$
	H_1,\ldots,H_m,\ldots,H_n
	$$
	
	and therefore has the potential to persist in arbitrary complex dimension. The present work should consequently be viewed as an attempt to isolate the pluripotential-theoretic component of the higher-dimensional uniformization problem.
	
	\subsection{Why finite Monge--Amp\`ere mass is useful}
	
	The condition
	
	$$
	\int_M(dd^cu)^n<+\infty
	$$
	
	has consequences extending beyond the construction of the exhaustion itself. When \(u\) is uniformly Lipschitz, it can be used as a weight in global \(L^2\) spaces of holomorphic functions,
	
	$$
	\mathcal H_q(M,u)
	=
	\left\{
	f\in\mathcal O(M):
	\int_M|f|^2e^{-qu}\,dV_g<+\infty
	\right\}.
	$$
	
	Under suitable local analytic estimates, the corresponding weighted Bergman kernel satisfies an inequality of the form
	
	$$
	B_q(x)\le C_qe^{qu(x)}.
	$$
	
	This gives logarithmic growth control for weighted holomorphic functions:
	
	$$
	\log|f(x)|
	\le
	\frac q2u(x)+\log C_f.
	$$
	
	The finite Monge--Amp\`ere mass then enters at a different stage, through intersection estimates for divisors. Schematically, one expects inequalities of the form
	
	$$
	[D_1]\wedge\cdots\wedge[D_n]
	\le
	C(dd^cu)^n,
	$$
	
	which imply finiteness of the total intersection multiplicity whenever the relevant divisors intersect properly.
	
	Thus the finite-mass weight simultaneously provides two types of control: analytic control of holomorphic functions through weighted \(L^2\) estimates and algebraic control of their zero sets through Monge--Amp\`ere mass. This dual role explains why the construction of such a weight is particularly effective in noncompact uniformization problems.

	\subsection{Possible extensions}
	
	 The principal contribution of this paper is the identification of an \(m\)-Hessian capacity mechanism for producing finite Monge--Amp\`ere mass in higher dimensions. The construction separates the genuinely pluripotential-theoretic difficulty from the subsequent weighted holomorphic and intersection-theoretic arguments. This separation provides a flexible framework for future attempts to extend complex uniformization results to complete K\"ahler manifolds of complex dimension \(n\ge3\).


	\section*{Declarations}

	The author declares that there are no competing interests.

\end{document}